\documentclass[12pt,reqno]{amsart}
\usepackage[lmargin=1in,rmargin=1in,tmargin=1in,bmargin=1in]{geometry}
\usepackage{graphicx, float, epstopdf}
\usepackage{hyperref, color}
\usepackage{amsfonts,amssymb,amsmath,amsthm}
\setlength{\textheight}{220mm} \setlength{\textwidth}{155mm}
\setlength{\oddsidemargin}{1.25mm}
\setlength{\evensidemargin}{1.25mm} \setlength{\topmargin}{0mm}
\usepackage{url, multirow}

\newcommand{\df}{\dfrac}
\newcommand{\tf}{\tfrac}

 \renewcommand{\arctan}{\tan^{-1}}
 \renewcommand{\a}{\alpha}
\renewcommand{\b}{\beta}

\newcommand{\G}{\Gamma}

\renewcommand{\(}{\left\(}
\renewcommand{\)}{\right\)}
\let\dotlessi=\i
\renewcommand{\i}{\infty}

\numberwithin{equation}{section}
\theoremstyle{plain}
\newtheorem{theorem}{Theorem}[section]
\newtheorem{lemma}[theorem]{Lemma}
\newtheorem{corollary}[theorem]{Corollary}

\newtheorem{definition}[theorem]{Definition}

\allowdisplaybreaks[4]

   \makeatletter
\def\proof{\@ifnextchar[{\@oproof}{\@nproof}}
\def\@oproof[#1][#2]{\trivlist\item[\hskip\labelsep\textit{#2 Proof of\
#1.}~]\ignorespaces}
\def\@nproof{\trivlist\item[\hskip\labelsep\textit{Proof.}~]\ignorespaces}

\makeatother

\begin{document}
\title[Koshliakov kernel and the Riemann zeta function]{Koshliakov kernel and identities involving the Riemann zeta function}
\author{Atul Dixit}
\address{Department of Mathematics, Tulane University, New Orleans, LA 70118, USA}
\email{adixit@tulane.edu}
\author{Nicolas Robles}
\address{Institut f\"{u}r Mathematik, Universit\"{a}t Z\"{u}rich, Winterthurerstrasse 190, CH-8057 Z\"{u}rich, Switzerland} \email{nicolas.robles@math.uzh.ch}

\author{Arindam Roy}\thanks{2010 \textit{Mathematics Subject Classification.} Primary 11M06, 11M35; Secondary 33E20, 33C10.\\
\textit{Keywords and phrases.} Riemann zeta function, Hurwitz zeta function, Bessel functions, Koshliakov.
}
\address{Department of Mathematics, University of Illinois, 1409 West Green
Street, Urbana, IL 61801, USA} \email{roy22@illinois.edu}
\author{Alexandru Zaharescu}
\address{Department of Mathematics, University of Illinois, 1409 West Green
Street, Urbana, IL 61801, USA and Institute of Mathematics of the Romanian
Academy, P.O.~Box 1-764, Bucharest RO-70700, Romania} \email{zaharesc@illinois.edu}
\maketitle
\begin{abstract}
Some integral identities involving the Riemann zeta function and functions reciprocal in a kernel involving the Bessel functions $J_{z}(x), Y_{z}(x)$ and $K_{z}(x)$ are studied. Interesting special cases of these identities are derived, one of which is connected to a well-known transformation due to Ramanujan, and Guinand.
\end{abstract}
\section{introduction}\label{intro}

In their long memoir \cite[p.~158, Equation (2.516)]{hl}, Hardy and Littlewood obtain, subject to certain assumptions unproved as of yet (for example, the Riemann Hypothesis), an interesting modular-type transformation involving infinite series of M\"{o}bius function as suggested to them by some work of Ramanujan. By a modular-type transformation, we mean a transformation of the form $F(\alpha)=F(\beta)$ for $\a\b=\text{constant}$. On pages $159-160$, they also give a generalization of the transformation for any pair of functions reciprocal to each other in the Fourier cosine transform as indicated to them by Ramanujan. 

Let $\Xi(t)$ be Riemann's $\Xi$-function defined by \cite[p.~16]{titch}
\begin{equation}\label{xif}
\Xi(t):=\xi(\tf{1}{2}+it),
\end{equation}
where
\begin{equation}\label{xi}
\xi(s):=\frac{1}{2}s(s-1)\pi^{-\tf{s}{2}}\Gamma\left(\frac{s}{2}\right)\zeta(s)
\end{equation}
is the Riemann $\xi$-function \cite[p.~16]{titch}. Here $\G(s)$ is the gamma function \cite[p.~255]{as} and $\zeta(s)$ is the Riemann zeta function \cite[p.~1]{titch}. 

A natural way to obtain similar such modular-type transformations is by evaluating integrals of the type
\begin{equation*}
\int_{0}^{\infty}f(t)\Xi(t)\cos\left(\frac{1}{2}t\log\alpha\right)\, dt,
\end{equation*}
where $f(t)=\phi(it)\phi(-it)$ for some analytic function $\phi$, since they are invariant under $\a\to 1/\a$, although the aforementioned transformation involving series of M\"{o}bius function is not obtainable this way. Ramanujan studied an interesting integral of this type in \cite{riemann}.

Motivated by Ramanujan's generalization, the authors of the present paper, in \cite{drrz01}, studied integrals of the above type but with the cosine function replaced by a general function $Z\left(\frac{1}{2}+it\right)$, which is an even function of $t$, real for real $t$, and depends on the functions reciprocal in the Fourier cosine transform. Several integral evaluations such as the one connected with the general theta transformation formula \cite[Equation (4.1)]{ingenious}, and those of Hardy \cite[Equation (2)]{ghh} and Ferrar \cite[p.~170]{ingenious} were obtained in \cite{drrz01} as special cases by evaluating these general integrals for specific choices of $f$.

Ramanujan \cite{riemann} also studied integrals of the form
\begin{equation*}
\int_{0}^{\infty}f\left(\frac{t}{2}, z\right)\Xi\left(\frac{t+iz}{2}\right)\Xi\left(\frac{t-iz}{2}\right)\cos \left(\frac{1}{2} t\log\alpha\right)\, dt,
\end{equation*}
where 
\begin{equation}\label{fdefi}
f(t, z)=\phi(it, z)\phi(-it, z),
\end{equation}
with $\phi$ being analytic in the complex variable $z$ and in the real variable $t$.
With $f$ being of the form just discussed, in the present paper, we study a generalization of the above integral of the form 
\begin{equation}\label{genintgen}
\int_{0}^{\infty}f\left(\frac{t}{2}, z\right)\Xi\left(\frac{t+iz}{2}\right)\Xi\left(\frac{t-iz}{2}\right)Z\left(\frac{1+it}{2}, \frac{z}{2}\right)\, dt,
\end{equation}
where the function $Z\left(\frac{1}{2} +it, z\right)$ depends on a pair of functions which are reciprocal to each other in the kernel
\begin{equation}\label{kernel}
\cos \left( {\frac{{\pi z}}{2}} \right){M_z}(4\sqrt {x} ) - \sin \left( {\frac{{\pi z}}{2}} \right){J_z}(4\sqrt {x} ),
\end{equation}
where 
\begin{equation*}
{M_z }(x) := \frac{2}{\pi }{K_z }(x) - {Y_z }(x).
\end{equation*}
Here $J_{z}(x)$ and $Y_{z}(x)$ are Bessel functions of the first and second kinds respectively, and $K_{z}(x)$ is the modified Bessel function. 

We call this kernel as the \emph{Koshliakov kernel} since Koshliakov \cite[Equation 8]{kosh1938} was the first mathematician to construct a function self-reciprocal in this kernel, namely, he showed that for real $z$ satisfying $-\tfrac{1}{2}<z<\tfrac{1}{2}$,
\begin{equation}\label{koshlyakov-1}
\int_{0}^{\infty} K_{z}(t) \left( \cos(\pi z) M_{2z}(2 \sqrt{xt}) -
\sin(\pi z) J_{2z}(2 \sqrt{xt}) \right)\, dt = K_{z}(x).
\end{equation}
(It is easy to see that this formula actually holds for complex $z$ with $-\tfrac{1}{2}<$ Re$(z)<\tfrac{1}{2}$.) Dixon and Ferrar \cite[Equation (1)]{dixfer3} had previously obtained the special case $z=0$ of the above integral evaluation. 

The Koshliakov kernel occurs in a variety of places in number theory, for example, in the extended form of the Vorono\"{\dotlessi} summation formula \cite[Theorems 6.1, 6.3]{bdrz}. In view of Koshliakov's aforementioned work, the integral transform 
\begin{equation*}
\int_{0}^{\i} g(t,z) \left( \cos(\pi z) M_{2z}(2 \sqrt{xt}) -
\sin(\pi z) J_{2z}(2 \sqrt{xt}) \right)\, dt,
\end{equation*}
where $g(t,z)$ is a function analytic in the real variable $t$ and in the complex variable $z$, is named in \cite[p.~70]{bdrz} as the \emph{first Koshliakov transform} of $g$. It arises naturally when one considers a function corresponding to the functional equation of an even Maass form in conjunction with Ferrar's summation formula; see the work of Lewis and Zagier \cite[p.~216--217]{lewzag}, for example.

Let the functions $\varphi$ and $\psi$ be related by
\begin{align} \label{recip1}
\varphi (x, z) = 2\int_0^\infty  {\psi (t, z)\left( {\cos \left( {\pi z} \right){M_{2z}}(4\sqrt {tx} ) - \sin \left( \pi z \right){J_{2z}}(4\sqrt {tx} )} \right)\, dt},
\end{align}
and 
\begin{align} \label{recip2}
\psi (x, z) = 2\int_0^\infty  {\varphi (t, z)\left( {\cos \left( {\pi z} \right){M_{2z}}(4\sqrt {tx} ) - \sin \left( \pi z \right){J_{2z}}(4\sqrt {tx} )} \right)\, dt} .
\end{align}
Moreover, define the normalized Mellin transforms $Z_1(s, z)$ and $Z_2(s, z)$ of the functions $\varphi(x, z)$ and $\psi(x, z)$ by
\begin{align}
\Gamma \left( {\frac{s-z}{2}} \right)\Gamma \left( {\frac{s+z}{2}} \right){Z_1}(s, z) &= \int_0^\infty  {x^{s - 1}}{\varphi (x, z)\, dx},\label{defZ1}\\
 \Gamma \left( {\frac{s-z}{2}} \right)\Gamma \left( {\frac{s+z}{2}} \right){Z_2}(s, z) &= \int_0^\infty  {x^{s - 1}}{\psi (x, z)\, dx},\label{defZ2}
\end{align}
where each equation is valid in a specific vertical strip in the complex $s$-plane. Set 
\begin{equation}\label{add}
Z(s, z):= Z_1(s, z) + Z_2(s, z) \quad \textnormal{and} \quad \Theta(x, z) := \varphi(x, z) + \psi(x, z),
\end{equation}
so that
\begin{align}\label{zthph}
\Gamma \left( {\frac{s-z}{2}}\right)\Gamma \left( {\frac{s+z}{2}}\right)Z(s, z) = \int_0^\infty  {{x^{s - 1}}\Theta (x, z)\, dx} 
\end{align}
for values of $s$ in the intersection of the two vertical strips.

In this paper, we evaluate the integrals in \eqref{genintgen} for two specific choices of $f(t, z)$ satisfying \eqref{fdefi}. The function $Z\left(\frac{1}{2}+it,z\right)$ in these integrals depends on the functions $\varphi(x,z)$ and $\psi(x,z)$ satisfying \eqref{recip1} and \eqref{recip2}, and belonging to the class $\Diamond_{\eta, \omega}$ defined below.
\begin{definition}
Let $0<\omega\leq \pi$ and $\eta>0$. For a fixed $z$, if $u(s, z)$ is such that
\begin{enumerate}
\item[i)] $u(s, z)$ is analytic of $s=re^{i\theta}$ regular in the angle defined by $r>0$, $|\theta|<\omega$,
\item[ii)] $u(s, z)$ satisfies the bounds
\begin{equation}\label{growth}
u(s, z)=
			\begin{cases}
			O_{z}(|s|^{-\delta}) & \mbox{ if } |s| \le 1,\\
			{O_z(|s|^{-\eta-1-|\textup{Re}(z)|})} & \mbox{ if } |s| > 1,
			\end{cases}
\end{equation}
\end{enumerate}
for every positive $\delta$ and uniformly in any angle $|\theta|<\omega$, then we say that $u$ belongs to the class $\Diamond_{\eta, \omega}$ and write $u(s, z)\in \Diamond_{\eta,\omega}$.
\end{definition}
We are now ready to state our two main theorems.
\begin{theorem}\label{ramguigene}
Let $\eta>1/4$ and $0<\omega\leq \pi$. Suppose that $\varphi,\psi \in \Diamond_{\eta,\omega}$, are reciprocal in the Koshliakov kernel as per \eqref{recip1} and \eqref{recip2}, and that $-1/2<$ \textup{Re}$(z)<1/2$. Let $Z(s, z)$ and $\Theta(x, z)$ be defined in \eqref{add}. Let $\sigma_{-z}(n)=\sum_{d|n}d^{-z}$. Then,
\begin{align}\label{ramguigeneid}
  &\frac{32}{\pi}\int_0^\infty  {\Xi \left( {\frac{{t + iz}}{2}} \right)\Xi \left( {\frac{{t - iz}}{2}} \right)Z\left( {\frac{{1 + it}}{2}}, \frac{z}{2} \right)\frac{{dt}}{{({t^2}+(z+1)^2)({t^2}+(z-1)^2)}}}  \nonumber \\
  &= \sum\limits_{n = 1}^\infty  {{\sigma _{ - z}}(n){n^{z/2}}\Theta \left(\pi n, \frac{z}{2}\right)} -R(z), 
\end{align}
where
\begin{align}\label{arz}
{R}(z) := {\pi ^{z/2}}\Gamma \left( {\frac{{-z}}{2}} \right)\zeta (-z)Z\left(1 + \frac{z}{2},\frac{z}{2}\right) + {\pi ^{-z/2}}\Gamma \left( {\frac{{z}}{2}} \right)\zeta (z)Z\left(1-\frac{z}{2},\frac{z}{2}\right).
\end{align}
\end{theorem}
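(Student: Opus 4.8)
The plan is to turn the integral in \eqref{ramguigeneid} into a Mellin--Barnes integral along the critical line, shift the contour to the right past the poles of the two zeta factors, and identify the shifted integral with the divisor series through Mellin inversion. I first set $s=\frac{1+it}{2}$, so that $it=2s-1$ and $t^2=-(2s-1)^2$, and use \eqref{xif}--\eqref{xi} to rewrite the product of $\Xi$-values as $\Xi\left(\frac{t+iz}{2}\right)\Xi\left(\frac{t-iz}{2}\right)=\xi\left(s-\frac{z}{2}\right)\xi\left(s+\frac{z}{2}\right)$. Writing each quadratic in the denominator as a difference of squares gives
\begin{equation*}
\left(t^2+(z+1)^2\right)\left(t^2+(z-1)^2\right)=16\left(s-\tfrac{z}{2}\right)\left(s-1-\tfrac{z}{2}\right)\left(s+\tfrac{z}{2}\right)\left(s-1+\tfrac{z}{2}\right),
\end{equation*}
and these four linear factors are exactly the polynomial prefactors $\tfrac12 w(w-1)$ produced by $\xi\left(s-\frac{z}{2}\right)$ and $\xi\left(s+\frac{z}{2}\right)$ in \eqref{xi}. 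After cancellation the quotient collapses to
\begin{equation*}
\frac{\Xi\left(\frac{t+iz}{2}\right)\Xi\left(\frac{t-iz}{2}\right)}{\left(t^2+(z+1)^2\right)\left(t^2+(z-1)^2\right)}=\frac{1}{64}\,\pi^{-s}\,\Gamma\left(\frac{2s-z}{4}\right)\Gamma\left(\frac{2s+z}{4}\right)\zeta\left(s-\frac{z}{2}\right)\zeta\left(s+\frac{z}{2}\right).
\end{equation*}

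Next I use that the integrand is even in $t$. The $\xi$-product is invariant under $s\mapsto 1-s$ by $\xi(w)=\xi(1-w)$, and the reciprocity \eqref{recip1}--\eqref{recip2}, combined with the Mellin transform of the Koshliakov kernel, forces the symmetry $Z\left(s,\frac{z}{2}\right)=Z\left(1-s,\frac{z}{2}\right)$ on the symmetric combination $Z=Z_1+Z_2$ from \eqref{add} (equivalently $Z_1\left(s,\frac{z}{2}\right)=Z_2\left(1-s,\frac{z}{2}\right)$). Doubling the range of integration, substituting $dt=-2i\,ds$, and collecting the constant $\frac{32}{\pi}\cdot\frac{1}{64}$—whose numerical factors conspire to $1$—the left side of \eqref{ramguigeneid} becomes
\begin{equation*}
\frac{1}{2\pi i}\int_{(1/2)}\pi^{-s}\,\Gamma\left(\frac{2s-z}{4}\right)\Gamma\left(\frac{2s+z}{4}\right)\zeta\left(s-\frac{z}{2}\right)\zeta\left(s+\frac{z}{2}\right)Z\left(s,\frac{z}{2}\right)\,ds.
\end{equation*}

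Then I move the line of integration to $\Re(s)=c$ with $1+\frac12|\Re(z)|<c<\eta+1+\frac12|\Re(z)|$, the upper bound keeping $c$ inside the strip of validity of \eqref{zthph} (with $z$ replaced by $z/2$) and the lower bound guaranteeing convergence of the Dirichlet series. Shifting rightward picks up, with a negative sign, the residues at the two simple poles $s=1+\frac{z}{2}$ and $s=1-\frac{z}{2}$ coming from $\zeta\left(s-\frac{z}{2}\right)$ and $\zeta\left(s+\frac{z}{2}\right)$ respectively; rewriting each residue by means of the functional equation $\pi^{-w/2}\Gamma\left(\frac{w}{2}\right)\zeta(w)=\pi^{-(1-w)/2}\Gamma\left(\frac{1-w}{2}\right)\zeta(1-w)$ assembles them into precisely the quantity $R(z)$ of \eqref{arz}, so their total contribution is $-R(z)$. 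On the shifted line I expand $\zeta\left(s-\frac{z}{2}\right)\zeta\left(s+\frac{z}{2}\right)=\sum_{n=1}^{\infty}\sigma_{-z}(n)\,n^{z/2}\,n^{-s}$, interchange summation and integration, and combine the three factors $\Gamma\left(\frac{2s-z}{4}\right)\Gamma\left(\frac{2s+z}{4}\right)Z\left(s,\frac{z}{2}\right)$ into the single Mellin transform $\int_0^\infty x^{s-1}\Theta\left(x,\frac{z}{2}\right)\,dx$ via \eqref{zthph}; Mellin inversion then evaluates the $n$-th inner integral as $\Theta\left(\pi n,\frac{z}{2}\right)$, yielding the series $\sum_{n=1}^{\infty}\sigma_{-z}(n)\,n^{z/2}\,\Theta\left(\pi n,\frac{z}{2}\right)$ and completing \eqref{ramguigeneid}.

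The principal difficulty is analytic rather than algebraic: justifying the contour shift and the term-by-term integration. The clean device is to perform the recombination via \eqref{zthph} \emph{before} estimating, so that the integrand reads $\pi^{-s}\zeta\left(s-\frac{z}{2}\right)\zeta\left(s+\frac{z}{2}\right)\int_0^\infty x^{s-1}\Theta\left(x,\frac{z}{2}\right)\,dx$. Because $\varphi,\psi$, and hence $\Theta$, lie in $\Diamond_{\eta,\omega}$—in particular they are analytic in the sector $|\arg x|<\omega$ and obey the bounds \eqref{growth}—the inner Mellin transform admits contour rotation within that sector and therefore decays rapidly in $|\Im(s)|$, dominating the merely polynomial growth of the two zeta factors on vertical lines. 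This is what makes the horizontal segments vanish, renders the vertical integrals absolutely convergent, and legitimizes both the sum--integral interchange and the Mellin inversion; the hypothesis $\eta>1/4$ is exactly what the required uniform decay estimates and the admissible choice of $c$ ultimately rest upon.
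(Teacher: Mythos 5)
Your proposal is correct and follows essentially the same route as the paper: fold the integral onto the line $\operatorname{Re}(s)=1/2$ using $\xi(s)=\xi(1-s)$ together with the functional equation $Z(s,z/2)=Z(1-s,z/2)$ (part (i) of the paper's Lemma \ref{FEandEstimate}), shift the contour rightward past the simple poles at $s=1\pm z/2$ to collect $-R(z)$ via \eqref{zetafe}, expand $\zeta(s-\tfrac{z}{2})\zeta(s+\tfrac{z}{2})$ by \eqref{doublezeta}, and recover $\Theta(\pi n,z/2)$ by Mellin inversion of \eqref{zthph}; the vanishing of the horizontal segments and the interchanges are justified exactly as you indicate, by the exponential decay coming from the sector analyticity of $\Diamond_{\eta,\omega}$ (part (ii) of that lemma) against Stirling. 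The only cosmetic differences are that the paper routes the algebra through the choice $\phi(s,z)=\bigl(\bigl(s+\tfrac12+\tfrac{z}{2}\bigr)\bigl(s+\tfrac12-\tfrac{z}{2}\bigr)\bigr)^{-1}$ and fixes the shifted line at $\operatorname{Re}(s)=5/4$, and that your stated upper bound for $c$ should read $1+\eta+\tfrac14|\operatorname{Re}(z)|$ (the second argument of $Z$ being $z/2$), which still leaves a nonempty admissible range under the hypotheses.
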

With $\alpha\beta=1$, and the pair $\left(\varphi(x,z),\psi(x,z)\right)=\left(K_{z}(2\alpha x),\beta K_{z}(2\beta x)\right)$ which easily satisfies \eqref{recip1} and \eqref{recip2} (as can be seen from \eqref{koshlyakov-1}, we obtain the following result \cite[Equation (3.18)]{transf}:
\begin{corollary}\label{ramgenecor}
Let $-1<$ \textup{Re}$(z)<1$. Then
\begin{align}\label{mainagain3}
&-\frac{32}{\pi}\int_{0}^{\infty}\Xi\left(\frac{t+iz}{2}\right)\Xi\left(\frac{t-iz}{2}\right)\frac{\cos\left(\frac{1}{2}t\log\alpha\right)}{(t^2+(z+1)^2)(t^2+(z-1)^2)}\, dt\nonumber\\
&=\sqrt{\alpha}\left(\alpha^{\frac{z}{2}-1}\pi^{\frac{-z}{2}}\Gamma\left(\frac{z}{2}\right)\zeta(z)+\alpha^{-\frac{z}{2}-1}\pi^{\frac{z}{2}}\Gamma\left(\frac{-z}{2}\right)\zeta(-z)-4\sum_{n=1}^{\infty}\sigma_{-z}(n)n^{z/2}K_{\frac{z}{2}}\left(2n\pi\alpha\right)\right).
\end{align}
\end{corollary}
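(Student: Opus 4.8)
The plan is to feed the reciprocal pair $\varphi(x,z)=K_{z}(2\alpha x)$, $\psi(x,z)=\beta K_{z}(2\beta x)$, with $\alpha\beta=1$, into Theorem~\ref{ramguigene} and carry out the explicit evaluations it calls for. First I would confirm the reciprocity \eqref{recip1}: the substitution $u=2\beta t$ recasts its right-hand side as $\int_{0}^{\infty}K_{z}(u)\big(\cos(\pi z)M_{2z}(2\sqrt{2\alpha xu})-\sin(\pi z)J_{2z}(2\sqrt{2\alpha xu})\big)\,du$, which is exactly the left side of Koshliakov's formula \eqref{koshlyakov-1} with $x$ replaced by $2\alpha x$; hence it equals $K_{z}(2\alpha x)=\varphi(x,z)$, and \eqref{recip2} follows from the symmetry $\alpha\leftrightarrow\beta$. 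The membership $\varphi,\psi\in\Diamond_{\eta,\omega}$ needed to apply the theorem is then checked from the exponential decay of $K_{z}$ at infinity (which beats any required polynomial rate) together with its mild growth at the origin, on whatever sub-range of $z$ is convenient.

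Next I would compute the normalized Mellin transforms \eqref{defZ1}--\eqref{defZ2} via $\int_{0}^{\infty}x^{s-1}K_{z}(ax)\,dx=2^{s-2}a^{-s}\Gamma(\tfrac{s+z}{2})\Gamma(\tfrac{s-z}{2})$. The gamma factors cancel exactly against those on the left of \eqref{defZ1}--\eqref{defZ2}, leaving $Z_{1}(s,z)=\tfrac14\alpha^{-s}$ and $Z_{2}(s,z)=\tfrac14\beta^{1-s}$, so that $Z(s,z)=\tfrac14(\alpha^{-s}+\beta^{1-s})$ and $\Theta(x,z)=K_{z}(2\alpha x)+\beta K_{z}(2\beta x)$. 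The decisive simplification is $Z(\tfrac{1+it}{2},\tfrac z2)=\tfrac{1}{2\sqrt\alpha}\cos(\tfrac12 t\log\alpha)$, which uses $\beta=1/\alpha$ and $\alpha^{it/2}+\alpha^{-it/2}=2\cos(\tfrac12 t\log\alpha)$; this is what turns the abstract kernel $Z$ in \eqref{ramguigeneid} into the cosine of \eqref{mainagain3}. I would likewise read off $Z(1\pm\tfrac z2,\tfrac z2)$ and insert them into \eqref{arz} to make $R(z)$ explicit.

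Putting these into \eqref{ramguigeneid}, the integral there carries the factor $\tfrac{16}{\pi\sqrt\alpha}$; solving for the bare $\Xi$-integral and applying the prefactor $-\tfrac{32}{\pi}$ of \eqref{mainagain3} expresses its left side as $2\sqrt\alpha R(z)-2\sqrt\alpha S_{\alpha}-2\sqrt\beta S_{\beta}$, where $S_{\alpha}=\sum_{n}\sigma_{-z}(n)n^{z/2}K_{z/2}(2\pi n\alpha)$ and $S_{\beta}$ is its $\beta$-analogue, the relation $\alpha\beta=1$ having been used to turn the $\beta\,\Theta$-term into $\sqrt\beta S_{\beta}$. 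The hard part is that this output is manifestly symmetric under $\alpha\leftrightarrow\beta$ (as it must be, since $\cos(\tfrac12 t\log\alpha)$ is), and after expanding $R(z)$ it is precisely the average $\tfrac12\big(G(\alpha)+G(\beta)\big)$, where $G(\alpha)$ denotes the right-hand side of \eqref{mainagain3}. Collapsing this symmetric average into the single expression $G(\alpha)$ is exactly the modular relation $G(\alpha)=G(\beta)$ -- the ``well-known transformation'' due to Ramanujan and Guinand highlighted in the abstract -- which I would invoke to reach the asymmetric form \eqref{mainagain3} recorded in \cite[Equation (3.18)]{transf}. It is worth flagging that Theorem~\ref{ramguigene} by itself yields only the symmetric average, so this modular relation is genuine external input rather than a formal consequence of the substitution.

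Finally, Theorem~\ref{ramguigene} supplies the identity only for $-1/2<\Re(z)<1/2$, whereas \eqref{mainagain3} is asserted on the wider strip $-1<\Re(z)<1$. I would close by analytic continuation: throughout $-1<\Re(z)<1$ the $\Xi$-integral and the Bessel series converge locally uniformly, each $K_{z/2}(2\pi n\alpha)$ is entire in $z$, and the only candidate singularities on the right -- the poles of $\Gamma(z/2)$ and $\Gamma(-z/2)$ at $z=0$ -- cancel between the two gamma--zeta terms. Both sides of \eqref{mainagain3} are therefore analytic in $z$ on the full strip, so the identity propagates there from the smaller strip by the identity theorem.
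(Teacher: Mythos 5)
Your proposal is correct and follows essentially the same route as the paper: compute $Z_1(s,z)=\tfrac14\alpha^{-s}$, $Z_2(s,z)=\tfrac14\alpha^{s-1}$ so that $Z(\tfrac{1+it}{2},\tfrac z2)=\tfrac{1}{2\sqrt\alpha}\cos(\tfrac12 t\log\alpha)$, obtain from Theorem \ref{ramguigene} the symmetric average $\tfrac12(G(\alpha)+G(\beta))$, collapse it via the Ramanujan--Guinand formula \eqref{mainagain} (with $a=\pi\alpha$, $b=\pi\beta$), and extend to $-1<\operatorname{Re}(z)<1$ by analytic continuation. You correctly flag, as the paper does implicitly by citing \eqref{mainagain}, that the modular relation $G(\alpha)=G(\beta)$ is genuine external input needed to pass from the symmetric average to the asymmetric form.
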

This result is illustrated below.
\begin{figure}[H]
\includegraphics[scale=0.835]{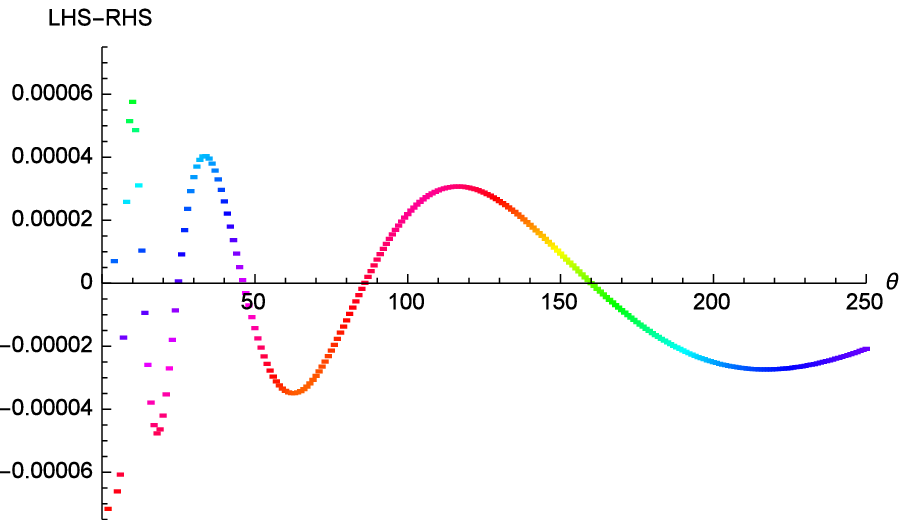}
\includegraphics[scale=0.835]{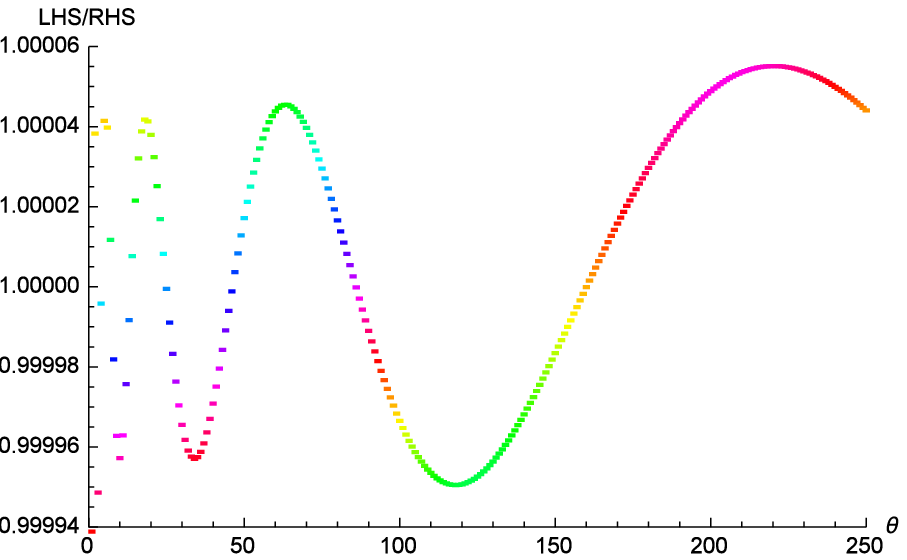}
\caption{\underline{Left}: Difference between the left and right sides of \eqref{mainagain3}; \newline \underline{Right}: Quotient $\frac{\text{left side}}{\text{right side}}$ of \eqref{mainagain3};\newline
Here $z=0$ and the series on the right is truncated to $10$ sums.}
\end{figure}
This identity is connected with the Ramanujan-Guinand formula (See Equation \eqref{mainagain} below.).
Our second result is
\begin{theorem}\label{hurgene}
Let $\eta>1/4$ and $0<\omega\leq \pi$. Suppose that $\varphi,\psi \in \Diamond_{\eta,\omega}$, are reciprocal in  the Koshliakov kernel as per \eqref{recip1} and \eqref{recip2}, and that $-1/2<$ \textup{Re}$(z)<1/2$. Let $Z(s, z)$ and $\Theta(x, z)$ be defined in \eqref{add}. Then,
\begin{align}\label{hurgeneid}
  &\pi^{\frac{z-3}{2}}\int_0^\infty  \Gamma \left( {\frac{{ z- 1 + it}}{4}} \right)\Gamma \left( {\frac{{z - 1 - it }}{4}} \right)\Xi \left( {\frac{{t + iz}}{2}} \right)\Xi \left( {\frac{{t - iz}}{2}} \right)Z\left( {\frac{{1 + it}}{2}},\frac{z}{2} \right)\frac{dt}{{{{t^2} + {{(z+1)}^2}}}}  \nonumber \\
  &= {\pi ^{z+1/2}}\Gamma \left( {\frac{{z + 3}}{2}} \right)\sum\limits_{n = 1}^\infty  {{\sigma _{ - z}}(n){n^{z + 1}}\int_0^\infty  {\Theta \left(x,\frac{z}{2}\right)\frac{{{x^{1 + z/2}}}}{{{{({x^2} + {\pi ^2}{n^2})}^{(z + 3)/2}}}}\, dx} } -S(z),
	\end{align}
	where
	\begin{equation*}
  S(z):=  {2^{-1 - z}}\Gamma (1 + z)\zeta (1 + z)Z\left( {1 + \frac{z}{2}},\frac{z}{2} \right) + 2^{-z}\G(z)\zeta(z)Z\left( {1 - \frac{z}{2}},\frac{z}{2} \right).
\end{equation*}
\end{theorem}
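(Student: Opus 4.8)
The plan is to reduce the $t$-integral on the left of \eqref{hurgeneid} to a Mellin--Barnes contour integral in the variable $s=(1+it)/2$, extract the Dirichlet series, and identify the residual poles with $S(z)$; this is the same strategy one would use for Theorem \ref{ramguigene}, with only the kernel factor changed. Concretely, first I would write $\Xi(t)=\xi(\tfrac{1}{2}+it)$ and expand each factor through \eqref{xi}. With $s=(1+it)/2$ one has $\Xi(\tfrac{t+iz}{2})\Xi(\tfrac{t-iz}{2})=\xi(s-\tfrac{z}{2})\xi(s+\tfrac{z}{2})$ and $Z(\tfrac{1+it}{2},\tfrac{z}{2})=Z(s,\tfrac{z}{2})$, while the kernel factor satisfies
\[
\Gamma\!\left(\frac{z-1+it}{4}\right)\Gamma\!\left(\frac{z-1-it}{4}\right)=\Gamma\!\left(\frac{s+z/2-1}{2}\right)\Gamma\!\left(\frac{z/2-s}{2}\right),\quad t^{2}+(z+1)^{2}=-4\left(s-\frac{z}{2}-1\right)\left(s+\frac{z}{2}\right).
\]
Because the integrand is even in $t$ (this uses that $Z(\tfrac{1}{2}+it,\cdot)$ is even in $t$, equivalently the reflection $Z(s,z)=Z(1-s,z)$ inherited from the reciprocity \eqref{recip1}--\eqref{recip2}), the integral over $(0,\infty)$ equals $\tfrac{1}{2}$ the integral over the whole line, i.e.\ a contour integral along $\operatorname{Re}(s)=\tfrac{1}{2}$. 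A convenient cancellation then occurs: the four quadratic factors from the two $\xi$'s combine with $t^{2}+(z+1)^{2}$ to leave only $-\tfrac{1}{16}(s-\tfrac{z}{2})(s+\tfrac{z}{2}-1)$, and I would absorb these two linear factors into the two kernel Gammas via $w\Gamma(w)=\Gamma(w+1)$, producing $\Gamma(\tfrac{s+z/2+1}{2})\Gamma(\tfrac{z/2-s+2}{2})$, whose arguments sum to $(z+3)/2$.

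Next, by \eqref{zthph} the remaining $\xi$-Gammas combine with $Z$ as $\Gamma(\tfrac{s-z/2}{2})\Gamma(\tfrac{s+z/2}{2})Z(s,\tfrac{z}{2})=\int_{0}^{\infty}x^{s-1}\Theta(x,\tfrac{z}{2})\,dx$, so that, up to an explicit constant, the integrand becomes $\pi^{-s}\Gamma(\tfrac{s+z/2+1}{2})\Gamma(\tfrac{z/2-s+2}{2})\zeta(s-\tfrac{z}{2})\zeta(s+\tfrac{z}{2})\int_{0}^{\infty}x^{s-1}\Theta(x,\tfrac{z}{2})\,dx$. I would move the line of integration from $\operatorname{Re}(s)=\tfrac{1}{2}$ to $\operatorname{Re}(s)=\tfrac{3}{2}$; since $-\tfrac{1}{2}<\operatorname{Re}(z)<\tfrac{1}{2}$, this line lies to the right of the two simple poles at $s=1+\tfrac{z}{2}$ and $s=1-\tfrac{z}{2}$ (from $\zeta(s-\tfrac{z}{2})$ and $\zeta(s+\tfrac{z}{2})$) but to the left of the poles $s=\tfrac{z}{2}+2+2k$ of $\Gamma(\tfrac{z/2-s+2}{2})$. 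On $\operatorname{Re}(s)=\tfrac{3}{2}$ I would insert $\zeta(s-\tfrac{z}{2})\zeta(s+\tfrac{z}{2})=\sum_{n\ge1}\sigma_{-z}(n)n^{z/2}n^{-s}$ (using $\sigma_{z}(n)=n^{z}\sigma_{-z}(n)$), interchange the sum with the $s$- and $x$-integrals, and evaluate the inner integral with the Mellin transform $\int_{0}^{\infty}x^{w-1}(x^{2}+a^{2})^{-\nu}\,dx=\tfrac{1}{2} a^{w-2\nu}\Gamma(\tfrac{w}{2})\Gamma(\nu-\tfrac{w}{2})/\Gamma(\nu)$ taken at $\nu=(z+3)/2$, $a=\pi n$. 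Collecting the powers of $\pi$ and $n$ reproduces exactly the series $\pi^{z+1/2}\Gamma(\tfrac{z+3}{2})\sum_{n}\sigma_{-z}(n)n^{z+1}\int_{0}^{\infty}\Theta(x,\tfrac{z}{2})x^{1+z/2}(x^{2}+\pi^{2}n^{2})^{-(z+3)/2}\,dx$.

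The shift crosses exactly the two poles above, and the corresponding residues, carrying the overall prefactor, sum to $S(z)$ and enter with a minus sign. At $s=1+\tfrac{z}{2}$ the surviving factors are $\zeta(1+z)$, $Z(1+\tfrac{z}{2},\tfrac{z}{2})$ and $\Gamma(\tfrac{z}{2})\Gamma(\tfrac{1+z}{2})$, which the duplication formula turns into $2^{1-z}\sqrt{\pi}\,\Gamma(z)$; after the bookkeeping this gives $2^{-1-z}\Gamma(1+z)\zeta(1+z)Z(1+\tfrac{z}{2},\tfrac{z}{2})$. At $s=1-\tfrac{z}{2}$ one meets $\zeta(1-z)$ and $\Gamma(\tfrac{z+1}{2})\Gamma(\tfrac{1-z}{2})=\pi/\cos(\tfrac{\pi z}{2})$; the functional equation $\zeta(1-z)=2(2\pi)^{-z}\cos(\tfrac{\pi z}{2})\Gamma(z)\zeta(z)$ then collapses this to $2^{-z}\Gamma(z)\zeta(z)Z(1-\tfrac{z}{2},\tfrac{z}{2})$, and the two residues together give precisely $S(z)$.

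The algebra above is essentially forced; the real work is the analytic justification. One must show that the displacement of the contour is legitimate and that the interchange of summation and integration is valid. This rests on Stirling's asymptotics, which make the Gamma factors decay exponentially in $\operatorname{Im}(s)$ and kill the horizontal connecting segments, together with standard growth bounds for $\zeta$ on vertical lines and the decay estimate \eqref{growth} built into the class $\Diamond_{\eta,\omega}$; the hypothesis $\eta>\tfrac{1}{4}$ is exactly what secures absolute convergence of the interchanged sum-integral and of the resulting series on the right of \eqref{hurgeneid}.
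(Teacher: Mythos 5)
Your proposal follows essentially the same route as the paper's proof: the same reduction of the $t$-integral to a Mellin--Barnes integral on $\operatorname{Re}(s)=\tfrac12$, the same absorption of the two surviving linear factors into the kernel Gammas via $w\Gamma(w)=\Gamma(w+1)$ to produce $\Gamma\bigl(\tfrac{s+z/2+1}{2}\bigr)\Gamma\bigl(\tfrac{z/2-s+2}{2}\bigr)$, the same rightward contour shift past the two simple zeta poles, the same beta-integral/Parseval evaluation yielding the series of integrals against $\Theta$, and the same residue computation (duplication formula at $s=1+\tfrac z2$, reflection plus the functional equation at $s=1-\tfrac z2$) giving $S(z)$. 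The one slip is the shifted line $\operatorname{Re}(s)=\tfrac32$: Lemma \ref{FEandEstimate} only guarantees analyticity of $Z\bigl(s,\tfrac z2\bigr)$ for $\operatorname{Re}(s)<1+\eta+\tfrac{|\operatorname{Re}(z)|}{4}$, which under the sole hypothesis $\eta>\tfrac14$ can be smaller than $\tfrac32$, so the shift should be to $\operatorname{Re}(s)=\tfrac54$ (as in the paper), which always lies in the admissible range.
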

As a special case when again $\alpha\beta=1$, and $\left(\phi(x,z),\psi(x,z)\right)=\left(K_{z}(2\alpha x),\beta K_{z}(2\beta x)\right)$, we obtain the following result established in \cite[Equation (4.23)]{transf}.
\begin{corollary}\label{hurr}
Let $-1<$ \textup{Re}$(z)<1$ and define
\begin{equation}\label{dvarphi}
\lambda(x, z)=\zeta(z+1,x)-\frac{x^{-z}}{z}-\frac{1}{2}x^{-z-1},
\end{equation}
where $\zeta(z,x)$ is the Hurwitz zeta function. Then
\begin{align}\label{mainn}
&\frac{8(4\pi)^{\frac{z-3}{2}}}{\Gamma(z+1)}\int_{0}^{\infty}\Gamma\left(\frac{z-1+it}{4}\right)\Gamma\left(\frac{z-1-it}{4}\right)
\Xi\left(\frac{t+iz}{2}\right)\Xi\left(\frac{t-iz}{2}\right)\frac{\cos\left( \tf{1}{2}t\log\a\right)}{t^2+(z+1)^2}\, dt\nonumber\\
&=\a^{\frac{z+1}{2}}\left(\sum_{n=1}^{\infty}\lambda(n\a, z)-\frac{\zeta(z+1)}{2\alpha^{z+1}}-\frac{\zeta(z)}{\alpha z}\right).
\end{align}
\end{corollary}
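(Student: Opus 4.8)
The plan is to obtain Corollary~\ref{hurr} as the specialization of Theorem~\ref{hurgene} to the reciprocal pair $\varphi(x,w)=K_{w}(2\a x)$, $\psi(x,w)=\b K_{w}(2\b x)$ with $\a\b=1$ and $w=z/2$. First I would verify the hypotheses. Writing \eqref{recip1} for this pair and substituting $u=2\b t$ turns the right-hand side into $\int_0^\infty K_w(u)\left(\cos(\pi w)M_{2w}(2\sqrt{2ux/\b})-\sin(\pi w)J_{2w}(2\sqrt{2ux/\b})\right)du$, which by Koshliakov's evaluation \eqref{koshlyakov-1} equals $K_w(2x/\b)=K_w(2\a x)=\varphi(x,w)$; the companion relation \eqref{recip2} follows by the symmetry $\a\leftrightarrow\b$. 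Membership $\varphi,\psi\in\Diamond_{\eta,\omega}$ is then a matter of the analyticity of $K_w$ in the sector $|\arg x|<\pi$, its exponential decay at infinity (which gives the large-$|s|$ bound of \eqref{growth} for any $\eta$), and its $O(|x|^{-|\Re(w)|})$ behaviour at the origin, admissible since $|\Re(w)|<1/4$ throughout $-1/2<\Re(z)<1/2$.

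Next I would compute the two ingredients $Z$ and $\Theta$. From the Mellin transform $\int_0^\infty x^{s-1}K_w(2\a x)\,dx=\tf14\a^{-s}\G\!\left(\tf{s-w}{2}\right)\G\!\left(\tf{s+w}{2}\right)$ and its analogue for $\psi$, the normalising Gamma factors in \eqref{defZ1}--\eqref{defZ2} cancel exactly, leaving $Z_1(s,w)=\tf14\a^{-s}$ and $Z_2(s,w)=\tf14\b^{1-s}=\tf14\a^{s-1}$, so that $Z(s,w)=\tf14\left(\a^{-s}+\a^{s-1}\right)$, independently of $w$. Evaluating at $s=\tf{1+it}{2}$ collapses this to $Z\!\left(\tf{1+it}{2},\tf z2\right)=\tf{1}{2\sqrt\a}\cos\!\left(\tf12 t\log\a\right)$, which is exactly how the cosine in \eqref{mainn} appears; comparing the remaining constants then shows that the left side of \eqref{mainn} equals $\tf{2^{z+1}\sqrt\a}{\G(z+1)}$ times the left side of \eqref{hurgeneid}.

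It remains to transform the right side of \eqref{hurgeneid}. Substituting $\Theta\!\left(x,\tf z2\right)=K_{z/2}(2\a x)+\b K_{z/2}(2\b x)$ and writing the inner integral through its Mellin transform, I would interchange the sum and the contour integral using $\sum_{n\ge1}\s_{-z}(n)n^{z/2-s}=\z\!\left(s-\tf z2\right)\z\!\left(s+\tf z2\right)$, which reduces the entire $\Theta$-series to a single Mellin--Barnes integral whose integrand is a product of four Gamma factors, the two zeta factors, and $\a^{-s}+\a^{s-1}$. Shifting the line of integration and collecting residues should then produce the answer: the poles of $\z\!\left(s\mp\tf z2\right)$ at $s=1\pm\tf z2$ yield the elementary terms $\z(z)/(\a z)$ and $\z(z+1)/(2\a^{z+1})$, while the infinite ladder of poles of one of the Gamma factors reassembles the double Dirichlet series $\sum_{n,k\ge1}(k+n\a)^{-(z+1)}$, that is $\sum_{n\ge1}\z(z+1,n\a)$ minus its $k=0$ part, and hence $\sum_{n\ge1}\lambda(n\a,z)$ once the lower-order corrections in \eqref{dvarphi} are absorbed. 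Finally, evaluating $S(z)$ through $Z\!\left(1\pm\tf z2,\tf z2\right)=\tf14\left(\a^{-1\mp z/2}+\a^{\pm z/2}\right)$, the $S(z)$ contribution combines with these residues to give precisely $\tf{\G(z+1)}{2^{z+1}}\a^{z/2}$ times the bracket on the right of \eqref{mainn}.

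The main obstacle is this last residue computation. One has to justify the interchange of summation and integration and the shift of the contour past infinitely many poles (for which the $\Diamond_{\eta,\omega}$ growth bounds of \eqref{growth} on $Z$ are exactly what is needed), and then perform the delicate bookkeeping that sorts the residues into the elementary terms versus the Hurwitz-zeta series and matches them against the two pieces of $S(z)$. Once the identity is secured on $-1/2<\Re(z)<1/2$, both sides of \eqref{mainn} are analytic on $-1<\Re(z)<1$, so it extends to the stated range by analytic continuation.
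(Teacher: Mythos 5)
Your setup coincides with the paper's: the choice of the pair $\left(K_{z}(2\a x),\b K_{z}(2\b x)\right)$, the computation $Z\left(s,\tfrac z2\right)=\tfrac14\left(\a^{-s}+\a^{s-1}\right)$ and $\Theta\left(\pi n,\tfrac z2\right)=K_{z/2}(2\pi n\a)+\b K_{z/2}(2\pi n\b)$, and the constant bookkeeping are all correct. But the two steps you defer are exactly where the proof lives, and the mechanism you propose for the first of them does not work. You want to rewrite the $\Theta$-series as a Mellin--Barnes integral on $\Re(s)=5/4$ and push the contour past ``the infinite ladder of poles of one of the Gamma factors'' so that the residues reassemble $\sum_{n}\zeta(z+1,n\a)$. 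The relevant ladder is that of $\G\left(\frac z4+1-\frac s2\right)$ at $s=\frac z2+2+2k$; the residue there carries the factor $\G\left(\frac z2+\frac32+k\right)\G\left(\frac z2+1+k\right)(\pi\a)^{-2k}$, which grows like $(k!)^{2}$ against only exponential decay, so the residue series diverges factorially and the contour cannot be moved past the ladder (equivalently, the integrand grows like $\G\left(s+\frac z2\right)(2\pi\a)^{-s}$ as $\Re(s)\to+\infty$). The paper instead evaluates the inner $x$-integral in closed form as a Lommel function (formula \textbf{6.565.7} of Gradshteyn--Ryzhik), passes to Glasser's integral representation and the elementary closed form of the resulting ${}_2F_1$, and then recognizes Hermite's formula for the Hurwitz zeta function; that chain is what actually produces $\sum_{n}\lambda(n\a,z)$ in \eqref{dun2.5}, and some substitute for it is unavoidable.

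Second, and independently: everything you feed into Theorem \ref{hurgene} is symmetric under $\a\leftrightarrow\b=1/\a$, so the specialization can only yield the symmetrized right-hand side $\tfrac12\left\{G(\a)+G(\b)\right\}$ with $G(\a)=\a^{(z+1)/2}\left(\sum_{n}\lambda(n\a,z)-\cdots\right)$, which is \eqref{dun3}. To pass from this to $G(\a)$ alone one must prove the modular relation $G(\a)=G(\b)$, i.e.\ \eqref{mainneq22}, and must prove it \emph{without} invoking the corollary, since in earlier work that relation was deduced \emph{from} the corollary. This is why the paper devotes the second half of Section 4 to an independent proof via the function $\O(x,z)$: the self-reciprocality Lemma \ref{selfomega}, the transformation \eqref{genelkosh}, and the identification \eqref{equi}. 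Your sketch silently asserts that the residues ``give precisely'' the asymmetric bracket of \eqref{mainn}; if your contour shift did produce that, it would prove \eqref{mainneq22} as a by-product, but since the shift fails, the modular relation remains an essential, unaddressed gap.
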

As in the previous case, the graphical illustration of this corollary is given below.
\begin{figure}[H]
\includegraphics[scale=0.835]{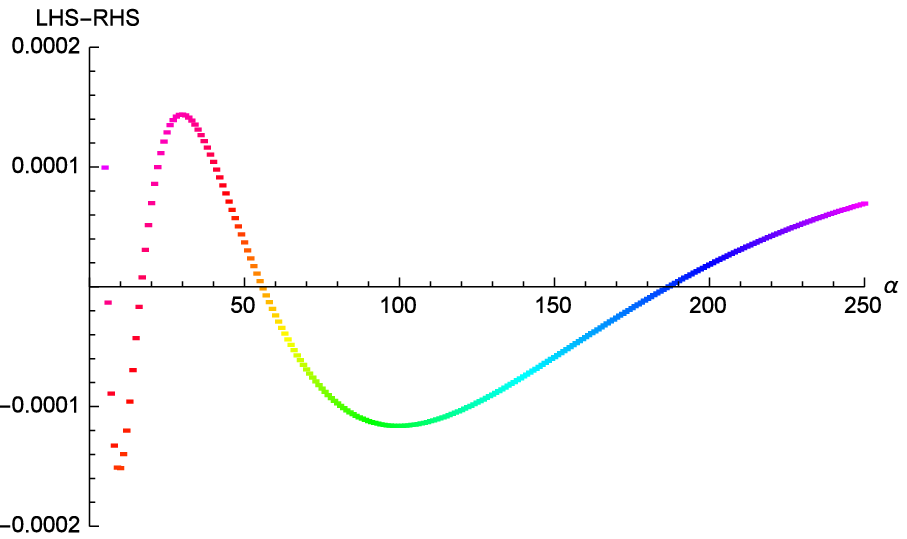}
\includegraphics[scale=0.835]{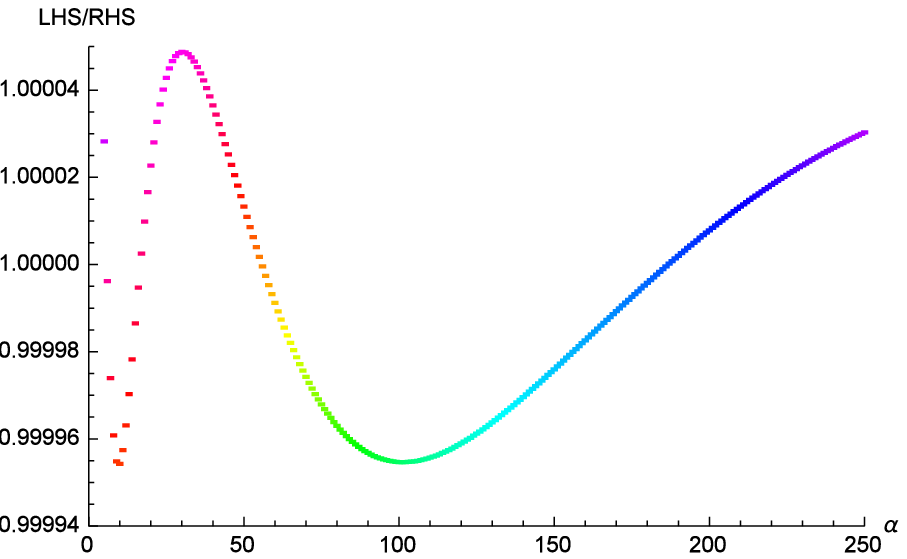}
\caption{\underline{Left}: Difference between the left and right sides of \eqref{mainn}; \newline \underline{Right}: Quotient $\frac{\text{left side}}{\text{right side}}$ of \eqref{mainn};\newline
Here $z=3/4$ and the series on the right is truncated to $10$ sums.}
\end{figure}
This is related to the modular-type transformation involving infinite series of Hurwitz zeta function. (See Equation \eqref{mainneq22} below.)

Theorem 5.3 from \cite{dixitmoll} gives a sufficient condition for a function to be equal to its first Koshliakov transform. In the same paper \cite[Equations (4.8), (4.17)]{dixitmoll}, there are two new explicit examples of such a function whereas \cite[p.~72, Equation (15.18)]{bdrz} contains a further new one. One may be able to obtain further corollaries of our theorems by working with these other examples. However, none of them is as simple as $K_{z}(x)$, and so we do not pursue this matter here. We note that Theorems \ref{ramguigene} and \ref{hurgene} work for any pair of functions reciprocal in the kernel \eqref{kernel}, not necessarily self-reciprocal. Dixon and Ferrar \cite[Section 5]{dixfer3} study, for example, the pair $\left(\varphi(x), \psi(x)\right)=\left(e^{-x},-\frac{2}{\pi}\left(e^{4x}\text{li}(e^{-4x})+e^{-4x}\text{li}(e^{4x})\right)\right)$, which is reciprocal in \eqref{kernel} with $z=0$. Here $\text{li}(x)$ denotes the logarithmic integral function defined by the Cauchy principal value
\begin{align*}
\operatorname{li} (x) = \mathop {\lim }\limits_{\varepsilon  \to {0^ + }} \left( {\int_0^{1 - \varepsilon } {\frac{{dt}}{{\log t}}}  + \int_{1 + \varepsilon }^x {\frac{{dt}}{{\log t}}} } \right).
\end{align*}

Finally, it must be mentioned here that Guinand \cite[Theorem 6]{gui}, \cite[Equation (1)]{guinand} derived 
the following summation formula involving $\sigma_{s}(n)$:
\begin{align}\label{guisum}
&\sum_{n=1}^{\infty}\sigma_{-z}(n)n^{\frac{z}{2}}f(n)-\zeta(1+z)\int_{0}^{\infty}x^{\frac{z}{2}}f(x)\, dx-\zeta(1-z)\int_{0}^{\infty}x^{-\frac{z}{2}}f(x)\, dx\nonumber\\
&=\sum_{n=1}^{\infty}\sigma_{-z}(n)n^{\frac{z}{2}}g(n)-\zeta(1+z)\int_{0}^{\infty}x^{\frac{z}{2}}g(x)\, dx-\zeta(1-z)\int_{0}^{\infty}x^{-\frac{z}{2}}g(x)\, dx.
\end{align}
Here $f(x)$ satisfies certain appropriate conditions (see \cite{gui} for details) and $g(x)$ is the 
transform of $f(x)$ with respect to
\begin{equation*}
-2\pi\sin\left(\tfrac{1}{2}\pi z\right)J_{z}(4\pi\sqrt{x})-\cos\left(\tfrac{1}{2}\pi z\right)\left(2\pi Y_{z}(4\pi\sqrt{x})-4K_{z}(4\pi\sqrt{x})\right),
\end{equation*}
which, up to a constant factor, is nothing but the Koshliakov kernel in \eqref{kernel}. Nasim \cite{nasim-1969a} derived a transformation formula involving functions reciprocal in Koshliakov kernel, and which is similar to \eqref{guisum}.
\section{Preliminaries}\label{prelim}
The Riemann zeta function satisfies the following functional equation \cite[p.~22, eqn. (2.6.4)]{titch}
\begin{equation}\label{zetafe}
\pi^{-\frac{s}{2}}\Gamma\left(\frac{s}{2}\right)\zeta(s)=\pi^{-\frac{(1-s)}{2}}\Gamma\left(\frac{1-s}{2}\right)\zeta(1-s),
\end{equation}
sometimes also written in the form 
\begin{equation}\label{zetaalt}
\xi(s)=\xi(1-s),
\end{equation}
where $\xi(s)$ is defined in \eqref{xi}.

For Re$(s)>\max(1,1+$Re$(a))$, the following Dirichlet series representation is well known \cite[p.~8, Equation (1.3.1)]{titch}:
	\begin{equation}\label{doublezeta}
\zeta(s)\zeta(s-a) = \sum\limits_{n = 1}^\infty  \frac{{\sigma _{a}(n)}}{n^{s}},
\end{equation}
Throughout the paper we use $R_a$ to denote the residue of a function being considered at its pole $z=a$.

We also use Parseval's theorem in the form \cite[p.~83, Equation (3.1.13)]{kp}
\begin{equation}\label{par}
\frac{1}{{2\pi i}}\int_{(\sigma )} {\mathfrak{G}(s)\mathfrak{H}(s){t^{ - s}}ds}  = \int_0^\infty  {g(x)h\left( {\frac{t}{x}} \right)\frac{{dx}}{x}},
\end{equation}
where $\mathfrak{G}$ and $\mathfrak{H}$ are Mellin transforms of $g$ and $h$ respectively.

The following lemma will be instrumental in proving our theorems.
\begin{lemma} \label{FEandEstimate}
Let $\eta>0$ and $0<\omega\leq \pi$. Let $-1/4<\textup{Re}(z)<1/4$. Suppose that $\varphi, \psi \in \Diamond_{\eta, \omega}$ are Koshliakov reciprocal functions. One has 
\begin{enumerate}
\item $Z(s, z)=Z(1-s, z)$ for all $s$ in $-\eta-|\textup{Re}(z)|/2 < \textup{Re}(s) < 1 + \eta+|\textup{Re}(z)|/2$.
\item $Z(\sigma + it, z) \ll_z e^{\left(\tfrac{\pi}{2} - \omega + \varepsilon\right)|t|}$ for every $\varepsilon >0$.
\end{enumerate}
\end{lemma}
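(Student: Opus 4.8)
The plan is to derive part (1) by taking Mellin transforms of the reciprocity relations \eqref{recip1}--\eqref{recip2}, and to obtain part (2) by rotating the Mellin contour into the sector of analyticity and invoking Stirling's formula. For part (1), write $\Phi(s)=\int_0^\infty x^{s-1}\varphi(x,z)\,dx$ and $\Psi(s)=\int_0^\infty x^{s-1}\psi(x,z)\,dx$, so that $\Phi(s)=\Gamma\!\left(\frac{s-z}{2}\right)\Gamma\!\left(\frac{s+z}{2}\right)Z_1(s,z)$ by \eqref{defZ1} and similarly for $\Psi$ and $Z_2$ by \eqref{defZ2}. Applying $\int_0^\infty x^{s-1}(\cdot)\,dx$ to \eqref{recip1} and interchanging the order of integration (legitimate in the common strip of convergence by the $\Diamond_{\eta,\omega}$ bounds), the inner integral reduces, after the substitution $v=4\sqrt{tx}$, to the Mellin transform $\widetilde{K}(2s):=\int_0^\infty v^{2s-1}\bigl(\cos(\pi z)M_{2z}(v)-\sin(\pi z)J_{2z}(v)\bigr)\,dv$ of the Koshliakov kernel, leaving the factor $\int_0^\infty t^{-s}\psi(t,z)\,dt=\Psi(1-s)$. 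One is thus left with $\Phi(s)=\dfrac{\widetilde{K}(2s)}{4\cdot 16^{\,s-1}}\,\Psi(1-s)$.

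The crux is to compute $\widetilde{K}(2s)$ and to check that, after dividing out the gamma factors of \eqref{defZ1}--\eqref{defZ2}, the resulting multiplier collapses to exactly $1$. Using the standard Mellin transforms of $J_{2z}$, $Y_{2z}$, $K_{2z}$ together with the reflection formula, one finds
\[
\widetilde{K}(2s)=\frac{2^{2s-1}}{\pi}\,\Gamma(s+z)\,\Gamma(s-z)\bigl(\cos\pi z+\cos\pi s\bigr),
\]
where the identity $\cos\pi z\,\cos\pi(s-z)-\sin\pi z\,\sin\pi(s-z)=\cos\pi s$ produces the clean trigonometric factor. Feeding this back, applying the duplication formula to $\Gamma(s\pm z)$ and the reflection formula $\Gamma(w)\Gamma(1-w)=\pi/\sin\pi w$, the multiplier
\[
\frac{\widetilde{K}(2s)}{4\cdot 16^{\,s-1}}\cdot\frac{\Gamma\!\left(\frac{1-s-z}{2}\right)\Gamma\!\left(\frac{1-s+z}{2}\right)}{\Gamma\!\left(\frac{s-z}{2}\right)\Gamma\!\left(\frac{s+z}{2}\right)}=\frac{\cos\pi z+\cos\pi s}{2\cos\!\left(\frac{\pi(s+z)}{2}\right)\cos\!\left(\frac{\pi(s-z)}{2}\right)}=1,
\]
the last equality being the product-to-sum formula. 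Hence $Z_1(s,z)=Z_2(1-s,z)$; running the same argument on \eqref{recip2} gives $Z_2(s,z)=Z_1(1-s,z)$, and adding the two yields $Z(s,z)=Z(1-s,z)$. First valid in the overlap of the two convergence strips, this identity then propagates to the stated strip by analytic continuation, the endpoints being read off from where the defining integrals for $Z_1,Z_2$ converge.

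For part (2), I would exploit that $\varphi,\psi$ are analytic and decaying in the sector $|\arg x|<\omega$. Rotating the ray of integration in $\Phi(s)=\int_0^\infty x^{s-1}\varphi(x,z)\,dx$ to $\arg x=\operatorname{sgn}(t)(\omega-\varepsilon)$ (the connecting arcs at $0$ and $\infty$ vanishing by the uniform $\Diamond_{\eta,\omega}$ bounds), the factor $|x^{s-1}|=r^{\sigma-1}e^{-t\arg x}$ furnishes $\Phi(\sigma+it)\ll_z e^{-(\omega-\varepsilon)|t|}$, and likewise for $\Psi$. Since $Z_1(s,z)=\Phi(s)\big/\bigl[\Gamma\!\left(\frac{s-z}{2}\right)\Gamma\!\left(\frac{s+z}{2}\right)\bigr]$, Stirling's formula gives $\bigl|\Gamma\!\left(\frac{s-z}{2}\right)\Gamma\!\left(\frac{s+z}{2}\right)\bigr|^{-1}\ll|t|^{c}e^{\pi|t|/2}$, and multiplying the two bounds (absorbing the polynomial factor into $e^{\varepsilon|t|}$) yields $Z_1(\sigma+it,z)\ll_z e^{(\pi/2-\omega+\varepsilon)|t|}$; the same holds for $Z_2$, and hence for $Z=Z_1+Z_2$.

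I expect the main obstacle to be the exact evaluation and collapse in part (1): getting the Mellin transform of $M_{2z}=\frac{2}{\pi}K_{2z}-Y_{2z}$ right, so that its $K$ and $Y$ pieces combine to $\frac{2^{2s-1}}{\pi}\Gamma(s+z)\Gamma(s-z)\bigl(1+\cos\pi(s-z)\bigr)$, and then verifying that the gamma-factor normalization selected in \eqref{defZ1}--\eqref{defZ2} is precisely the one for which the multiplier is identically $1$. A secondary technical point is the rigorous justification of the interchange of integration in part (1) and of the contour rotation in part (2); both rest on the uniform-in-angle decay built into the definition of $\Diamond_{\eta,\omega}$, and tracking the admissible range of $\textup{Re}(s)$ throughout is what pins down the strip in the statement.
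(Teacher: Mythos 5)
Your proposal is correct and follows essentially the same route as the paper: Mellin-transforming the reciprocity relations, evaluating the Mellin transform of the Koshliakov kernel (which the paper imports from Lemma 5.1 of \cite{dixitmoll} rather than assembling from the $J$, $Y$, $K$ transforms as you do), collapsing the multiplier to $1$ via duplication, reflection, and the product-to-sum identity, and then rotating the contour plus Stirling for the exponential bound. The only substantive difference is that the paper spends considerable effort justifying the Fubini interchange by splitting the first quadrant into six regions governed by $t=1$, $x=1$, and $xt=1$, which you flag as a technical point but do not carry out.
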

\begin{proof}
Fix $\eta>0$, $0 < \omega \le \pi$ and let $\varphi, \psi \in \Diamond_{\eta,\omega}$. We begin by proving the first part of the claim involving the functional equation between $Z_1$ and $Z_2$. Set
\begin{equation}\label{defh}
w(s,z) := \Gamma \left( {\frac{s-z}{2} } \right)\Gamma \left( {\frac{s+z}{2} } \right).
\end{equation}
Note that
\begin{align}\label{comp}
  &w(1 - s,z){Z_1}(1 - s,z) \nonumber\\
	&= \int_0^\infty  {{x^{ - s}}\varphi (x, z)\, dx}  \nonumber \\
	 &= 2\int_0^\infty  {{x^{ - s}}\int_0^\infty  {\psi (t,z)\left( {\cos \left( {\pi z} \right){M_{2z}}(4\sqrt {tx} ) - \sin \left( {\pi z} \right){J_{2z}}(4\sqrt {tx} )} \right)\, dt}\, dx}  \nonumber \\
   &= 2\int_0^\infty  {\psi (t,z)\int_0^\infty  {{x^{ - s}}\left( {\cos \left( {\pi z} \right){M_{2z}}(4\sqrt {tx} ) - \sin \left( {\pi z} \right){J_{2z}}(4\sqrt {tx} )} \right)\, dx}\, dt}  \nonumber \\
   &= 2\pi^{2-2s}\int_0^\infty  {{t^{s - 1}}\psi (t,z)\int_0^\infty  {{u^{ - s}}\left( {\cos \left( {\pi z} \right){M_{2z}}(4\pi\sqrt u ) - \sin \left( {\pi z} \right){J_{2z}}(4\pi\sqrt u )} \right)\, du}\, dt}  \nonumber \\
   &= \frac{{{2^{2s - 1}}}}{\pi }\Gamma \left( {1 - s - z} \right)\Gamma \left( {1 - s + z} \right)\left( {\cos \left( {\pi z} \right) - \cos (\pi s)} \right)\int_0^\infty  {{t^{s - 1}}\psi (t,z)\, dt}  \nonumber \\
   &= \frac{{{2^{2s - 1}}}}{\pi }\Gamma \left( {1 - s - z} \right)\Gamma \left( {1 - s + z} \right)\left( {\cos \left( {\pi z} \right) - \cos (\pi s)} \right)w(s,z){Z_2}(s,z), 
\end{align}
where in the penultimate step, we used the integral evaluation
\begin{align*}
&\int_{0}^{\infty} x^{-s} \left( \cos \left( \pi z \right) 
M_{2z}( 4 \pi \sqrt{x}) - \sin \left( \pi z \right) 
J_{2z}(4 \pi \sqrt{x}) \right)\, dx \\
&= \frac{1}{2^{2-2s}\pi^{3-2s} } 
\Gamma \left( 1-s - z \right)
\Gamma \left( 1-s + z \right)
\left( \cos \left( \pi z \right) - \cos(\pi s) \right),
\end{align*}
valid for $\frac{1}{4}<$ Re$(s)<1\pm$ Re$(z)$. This can in turn be obtained by replacing $s$ by $1-s$, $z$ by $2z$, and letting $y=1$ in Lemma 5.1 of \cite{dixitmoll}. Using the duplication formula for the gamma function
\begin{align}
\G(s)\G\left(s+\frac{1}{2}\right)=\frac{\sqrt{\pi}}{2^{2s-1}}\G(2s),\label{dup}
\end{align}
and the reflection formula
\begin{align*}
\G(s)\G(1-s)=\frac{\pi}{\sin(\pi s)},\label{ref}
\end{align*}
we see that
\begin{equation*}
Z_1(1-s,z) = Z_2(s,z).
\end{equation*}
Similarly, $Z_2(1-s,z) = Z_1(s,z)$, and hence from \eqref{add}, we see that 
\begin{equation*}
Z(1-s,z)= Z(s,z).
\end{equation*}
The interchange of the order of integration in the third step in \eqref{comp} requires justification. We provide that below using Fubini's theorem. We only show that the double integral 
\begin{equation*}
\int_0^\infty  {\psi (t,z)\int_0^\infty  {{x^{ - s}}\left( {\cos \left( {\pi z} \right){M_{2z}}(4\sqrt {tx} ) - \sin \left( {\pi z} \right){J_{2z}}(4\sqrt {tx} )} \right)\, dx}\, dt}
\end{equation*}
converges absolutely for $\tfrac{3}{4} < \textup{Re}(s) < 1-\left|\textup{Re}(z)\right|$. (This necessitates Re$(z)$ to be between $-1/4$ and $1/4$.) The absolute convergence of the other one can be proved similarly.

Fix $\varepsilon_0>0$ such that
\begin{equation*}
\frac{3}{4} + {\varepsilon _0} \leqslant \operatorname{Re} (s) \leqslant 1 -\left|\textup{Re}(z)\right|- {\varepsilon _0}.
\end{equation*}
Then
\begin{equation}\label{xgrowth}
|x^{ - s}| \le 
			  \begin{cases}
				x^{\varepsilon_0-1+|\textup{Re}(z)|}, &\mbox{ if } \quad 0 \le x \le 1, \\
        x^{-\varepsilon_0-3/4}, &\mbox{ if } \quad x \ge 1.			
				\end{cases}
\end{equation}
The asymptotics of Bessel functions of the first and second kinds \cite[p.~360, 364]{as} give
\begin{equation*}
J_z(v) \ll_z 
\begin{cases}
v^{\textup{Re}(z)}, &\mbox{ if } \quad  0 \le v \le 1, \\
v^{-1/2}, &\mbox{ if } \quad  v > 1,			
\end{cases}			
\end{equation*}
\begin{equation*}
Y_z(v) \ll_z 
				\begin{cases}
				1+|\log v|, &\mbox{ if } \quad z=0, 0 \le v \le 1, \\
				v^{-|\textup{Re}(z)|}, &\mbox{ if } \quad z\neq 0,  0 \le v \le 1, \\
        v^{-1/2}, &\mbox{ if } \quad v > 1,
				\end{cases}			
\end{equation*}
whereas those for the modified Bessel function \cite[p.~375, 378]{as} give
\begin{equation}\label{kasym}
K_z(v) \ll_z 
\begin{cases}
1+|\log v|, &\mbox{ if } \quad z=0, 0 \le v \le 1, \\
v^{-|\textup{Re}(z)|}, &\mbox{ if } \quad z\neq 0,  0 \le v \le 1, \\
v^{-1/2}e^{-v}, &\mbox{ if } \quad v > 1.		
\end{cases}			
\end{equation}
Therefore
\begin{equation}\label{kergrowth}
\left\lvert{\cos \left( {\pi z} \right){M_{2z}}(4\sqrt {tx} ) - \sin \left( {\pi z} \right){J_{2z}}(4\sqrt {tx} )}\right\rvert \ll_z 
				\begin{cases}
				1+|\log (tx)|, &\mbox{ if } \quad z=0, 0 \le tx \le 1, \\
				(tx)^{-|\textup{Re}(z)|}, &\mbox{ if } \quad z\neq 0,  0 \le tx \le 1, \\
				(tx)^{-1/4}, &\mbox{ if } \quad tx \ge 1.			
				\end{cases}	
\end{equation}
We now divide the first quadrant of the $t,x$ plane into six different regions whose boundaries are determined by the $t=1$, $x=1$ and the hyperbola $xt=1$. 
\begin{figure}[h]
		\includegraphics[scale=0.96]{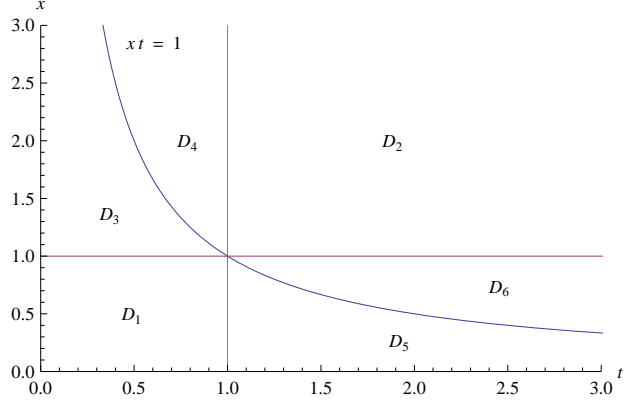}
			\caption{Regions from the hyperbola $xt=1$.}
\end{figure}
\newline Let $D=[0,\infty)\times[0,\infty)$. Set
\begin{equation*}
F_z(t,x): = x^{ - s}\psi (t, z)\left({\cos \left( {\pi z} \right){M_{2z}}(4\sqrt {tx} ) - \sin \left( {\pi z} \right){J_{2z}}(4\sqrt {tx} )}\right)
\end{equation*}
and
\begin{equation*}
I_{\psi}(s,z):=\int_{D}|F_z(x,t)|d\lambda,
\end{equation*}
where $d\lambda$ denotes the Lebesgue measure. Let
\begin{equation*}
I_{\psi}(s,z): = I_1(s,z) + I_2(s,z) +  \cdots + I_6(s,z), \quad \textnormal{with} \quad I_j(s,z): = \iint_{D_j} |F_z(x,t)|d\lambda,
\end{equation*}
 We estimate each $I_j$ separately. Let us assume that $\varepsilon < \varepsilon_0$. Using \eqref{growth}, \eqref{xgrowth}, and \eqref{kergrowth} in the regions $D_1$,$D_2$,$\cdots$,$D_6$, we have
\begin{align*}
  I_1(s,z) &
 \ll_{\varepsilon, z} \iint_{{D_1}} {\frac{1}{{{x^{1 - {\varepsilon _{_0}}-|\textup{Re}(z)|}}}}\frac{1}{{{t^{\delta}}}}\frac{1}{{{{(xt)}^{\varepsilon+|\textup{Re}(z)|} }}}\, dx\, dt}= {\int_0^1 {\frac{{dx}}{{{x^{1 - {\varepsilon _0} + \varepsilon }}}}} }  {\int_0^1 {\frac{{dt}}{{{t^{\delta+ \varepsilon+|\textup{Re}(z)| }}}}} } < \infty,  \nonumber  \\
{I_2(s,z)}& \ll \iint_{{D_2}} {\frac{1}{{{x^{3/4 + {\varepsilon _{_0}}}}}}\frac{1}{{{t^{1 + \eta+|\textup{Re}(z)| }}}}\frac{1}{{{t^{1/4}}{x^{1/4}}}}\, dx\, dt}\ll  {\int_1^\infty  {\frac{{dx}}{{{x^{1 + {\varepsilon _0}}}}}} }  {\int_1^\infty  {\frac{{dt}}{{{t^{5/4 + \eta }}}}} } < \infty ,\\
  {I_3(s,z)} &\ll \iint_{{D_3}} {\frac{1}{{{x^{3/4 + {\varepsilon _{_0}}}}}}\frac{1}{{{t^{\delta}}}}\frac{1}{{{{(xt)}^{\varepsilon+|\textup{Re}(z)|} }}}\, dx\, dt}\nonumber\\
	&\ll_{\varepsilon} \iint_{{D_3}} {\frac{1}{{{x^{3/4 + {\varepsilon _{_0}} + \varepsilon +|\textup{Re}(z)|}}}}\frac{1}{{{t^{\delta + \varepsilon+|\textup{Re}(z)| }}}}\, dx\, dt} \nonumber \\
   &= \int_1^\infty  {\frac{1}{{{x^{3/4 + {\varepsilon _0} + \varepsilon+|\textup{Re}(z)| }}}}{\int_0^{1/x} {\frac{{dt}}{{{t^{\delta + \varepsilon+|\textup{Re}(z)| }}}}} }\, dx} \\
   &\quad\ll \int_1^\infty  {\frac{{dx}}{{{x^{7/4 - \delta + {\varepsilon _0}}}}}} < \infty, \nonumber \\
{I_4(s,z)} &\ll \iint_{{D_4}} {\frac{1}{{{x^{3/4 + {\varepsilon _{_0}}}}}}\frac{1}{{{t^{\delta}}}}\frac{1}{{{t^{1/4}}{x^{1/4}}}}\, dx\, dt} = \int_1^\infty  {\frac{1}{{{x^{1 + {\varepsilon _0}}}}}\int_{1/x}^1 {\frac{{dt}}{t^{\delta+1/4}}} dx} \ll \int_1^\infty  {\frac{{1}}{{{x^{1 + {\varepsilon _0}}}}}dx}  < \infty, \\
  I_5(s,z) &\ll \iint_{{D_5}} {\frac{1}{{{x^{1-|\textup{Re}(z)| - {\varepsilon _{_0}}}}}}\frac{1}{{{t^{1 +|\textup{Re}(z)|+ \eta }}}}\frac{1}{{{{(xt)}^{\varepsilon+|\textup{Re}(z)|} }}}\, dx\, dt}\nonumber \\
	&= \iint_{{D_5}} {\frac{1}{{{x^{1 - {\varepsilon _{_0}} + \varepsilon }}}}\frac{1}{{{t^{1+2|\textup{Re}(z)| + \eta  + \varepsilon }}}}\, dx\, dt} \nonumber \\
   &= \int_1^\infty  {\frac{1}{{{t^{1 +2|\textup{Re}(z)|+ \eta  + \varepsilon }}}}\bigg( {\int_0^{1/t} {\frac{{dx}}{{{x^{1 - {\varepsilon _0} + \varepsilon }}}}} } \bigg)dt}\\ 
   &\ll _{\varepsilon ,\varepsilon _0}\int_1^\infty  {\frac{{dt}}{{{t^{1 +2|\textup{Re}(z)|+ \eta  + {\varepsilon _0}}}}}}  < \infty . \nonumber 
\end{align*}
Finally,
\begin{align*}
{I_6(s,z)} &\ll \iint_{{D_6}} \frac{1}{{{x^{1 -|\textup{Re}(z)|- {\varepsilon _{_0}}}}}}\frac{1}{{{t^{1 +|\textup{Re}(z)|+ \eta }}}}\frac{1}{{{t^{1/4}}{x^{1/4}}}}\, dx\, dt\\\nonumber &= \int_1^\infty  {\frac{1}{{{t^{5/4+|\textup{Re}(z)| + \eta }}}}\int_{1/t}^1 {\frac{{dx}}{{{x^{5/4 -|\textup{Re}(z)|- {\varepsilon _0}}}}}} dt}\\\nonumber
  &\ll \int_1^\infty  {\frac{1}{{{t^{1 +2|\textup{Re}(z)|+ \eta  + {\varepsilon _0}}}}}dt}  < \infty .\end{align*}
Hence $I_{\psi}(s,z)<\infty$, which justifies the interchange of the order of integration in \eqref{comp}. Finally, let us look at $Z_2(s,z)$ and the Mellin transform of $\psi(t,z)$. We split up the integral as
\begin{equation*}
\int_0^\infty  {|\psi (t,z)||{t^{s - 1}}|\, dt}  = \int_0^1 {|\psi (t,z)||{t^{s - 1}}|\, dt}  + \int_1^\infty  {|\psi (t,z)||{t^{s - 1}}|\, dt} .
\end{equation*}
Since $\psi\in \Diamond_{\eta, \omega}$, for the latter integral, we have
\begin{equation*}\int_1^\infty  {|\psi (t,z)||{t^{s - 1}}|\, dt}  \ll \int_1^\infty  {{t^{\operatorname{Re} (s) - 2-|\textup{Re}(z)|/2 - \eta }}\, dt}  
< \infty, \end{equation*}
provided that $\textup{Re}(s)<1+\eta+\tfrac{|\textup{Re}(z)|}{2}$. 
Similarly, for the first integral, we have
\begin{equation*}
\int_0^1 {|\psi (t, z)|{t^{\operatorname{Re} (s) - 1}}dt} \ll \int_0^1 {{t^{\operatorname{Re} (s) - 1 -\delta}}}\, dt < \infty,
\end{equation*}
provided that $\textup{Re}(s) > \delta$.
This shows that the function $Z_2(s,z)$ is well-defined and analytic, as a function of $s$, in the region
\begin{align} \label{etarange}
\delta < \textup{Re}(s) < 1 + \eta+\frac{|\textup{Re}(z)|}{2},
\end{align}
for every $\delta>0$. Similarly, $Z_1(s,z)$ is well-defined and analytic in $s$ in this region. Thus, by analytic continuation, the equality $Z_1(1-s,z)=Z_2(s,z)$ holds in the vertical strip
\begin{equation*}
-\eta-\frac{|\textup{Re}(z)|}{2} < \textup{Re}(s) < 1 + \eta+\frac{|\textup{Re}(z)|}{2}.
\end{equation*}
%
To prove the second part of the lemma, let us consider the line along any radius vector $r$ and angle $\theta$ (we would choose $-\theta$, if $t=$ Im$(s)<0$), where $|\theta|<\omega$. Then by Cauchy's theorem we can deform the integral \eqref{defZ1} to 
\begin{equation*}
h(\sigma+it,z) Z_1(\sigma+it,z)=\int_{0}^{\infty}r^{\sigma+it-1}e^{i\theta(\sigma+it)}\varphi(re^{i\theta},z)\, dr,
\end{equation*}
where $\theta, t>0$. Therefore, by splitting the range of integration to $[0,1]$ and $[1,\infty]$ and the fact that $Z_1$ is analytic in the region defined by \eqref{etarange} we see that
\begin{equation}\label{absz}
 \left \lvert h(\sigma+it,z) Z_1(\sigma+it,z) \right \rvert 
\leq e^{-\theta t}\int_{0}^{\infty}r^{\sigma-1}|\varphi(re^{i\theta}, z)|\, dr \ll e^{-|\theta||t|},
\end{equation}
since $\varphi \in \Diamond_{\eta,\omega}$. By Stirling's formula for $\Gamma(\sigma+it)$ in the vertical strip $p\leq\sigma\leq q$ \cite[p.~224]{cop}, we have, as $|t|\to\infty$,
\begin{equation}\label{strivert}
  |\Gamma(s)|=\sqrt{2\pi}|t|^{\sigma-\frac{1}{2}}e^{-\frac{1}{2}\pi |t|}\left(1+O\left(\frac{1}{|t|}\right)\right),
\end{equation}
as $|t|\to \infty$. Now combining \eqref{defh}, \eqref{absz} and \eqref{strivert} we get 
\begin{equation*}
  Z_1(1-s,z)=Z_2(s,z) \ll_z  e^{(\frac{\pi}{2}-\omega+\varepsilon)|t|},
  \end{equation*}
for every $\varepsilon>0$. This completes the proof the lemma.
\end{proof}
\section{Generalization of an integral identity connected with the Ramanujan-Guinand formula}\label{1.2}
We prove Theorem \ref{ramguigene} here. The convergence of the series in \eqref{ramguigeneid} follows from the fact that $\Theta(s,z)\in\Diamond_{\eta,\omega}$.

Using \eqref{xif} and \eqref{zetaalt}, it is easy to see that
\begin{equation*}
\Xi \left( {\frac{{t + iz}}{2}} \right)\Xi \left( {\frac{{t - iz}}{2}} \right) = \Xi \left( {\frac{{ - t + iz}}{2}} \right)\Xi \left( {\frac{{ - t - iz}}{2}} \right).
\end{equation*}
Along with \eqref{fdefi} and part (i) of Lemma \ref{FEandEstimate}, this gives
\begin{align}\label{imp}
  &\int_0^\infty  {f\left( {\frac{t}{2}, z} \right)\Xi \left( {\frac{{t + iz}}{2}} \right)\Xi \left( {\frac{{t - iz}}{2}} \right)Z\left( {\frac{{1 + it}}{2}, \frac{z}{2}} \right)\, dt}  \nonumber \\
   &= \frac{1}{2}\int_{ - \infty }^\infty  {f\left( {\frac{t}{2}, z} \right)\Xi \left( {\frac{{t + iz}}{2}} \right)\Xi \left( {\frac{{t - iz}}{2}} \right)Z\left( {\frac{{1 + it}}{2},\frac{z}{2}} \right)\, dt}  \nonumber \\
   &= \frac{1}{i}\int_{(\tfrac{1}{2})} {\phi\left( {s - \frac{1}{2}},z \right)\phi\left( {\frac{1}{2} - s},z \right)\xi \left( {s - \frac{z}{2}} \right)\xi \left( {s + \frac{z}{2}} \right)Z\left(s,\frac{z}{2}\right)\, ds}.
\end{align}
Now choose
\begin{equation*}
\phi(s, z) := {\left( {\left( {s + \frac{1}{2} + \frac{z}{2}} \right)\left( {s + \frac{1}{2} - \frac{z}{2}} \right)} \right)^{ - 1}}
\end{equation*}
so that from \eqref{imp},
\begin{align}\label{firim}
&16\int_0^\infty  {\Xi \left( {\frac{{t + iz}}{2}} \right)\Xi \left( {\frac{{t - iz}}{2}} \right)Z\left( {\frac{{1 + it}}{2}}, \frac{z}{2} \right)\frac{{dt}}{{({t^2}+(z+1)^2)({t^2}+(z-1)^2)}}}  \nonumber \\
  &=\frac{1}{{4i}}\int_{(\tfrac{1}{2})} {{\pi ^{ - s}}\Gamma \left( {\frac{s}{2} - \frac{z}{4}} \right)\Gamma \left( {\frac{s}{2} + \frac{z}{4}} \right)\zeta \left( {s - \frac{z}{2}} \right)\zeta \left( {s + \frac{z}{2}} \right)Z\left(s,\frac{z}{2}\right)\, ds}
	\end{align}
	Since $-1/2 < $ Re$(z)<1/2$ and Re$(s)=1/2$, we have $1/4 < $ Re$(s-z/2)<3/4$. In order to use \eqref{doublezeta}, with $s$ replaced by $s-z/2$ and $a$ replaced by $-z$, which is therefore valid for Re$(s) > 1 \pm $ Re$\left(\frac{z}{2}\right)$, we shift the line of integration from Re$(s)=1/2$ to Re$(s)=5/4$. In doing so, we encounter a pole of order $1$ at $s=1-z/2$ (due to $\zeta(s+z/2)$) and a pole of order $1$ at $s=1+z/2$ (due to $\zeta(s-z/2)$). Using the notation for the residue of a function at a pole, we see that the integral in \eqref{firim} is equal to
\begin{align*}
   & \frac{1}{{4i}}\left(\int_{(\frac{5}{4})} {{\pi ^{ - s}}\Gamma \left( {\frac{s}{2} - \frac{z}{4}} \right)\Gamma \left( {\frac{s}{2} + \frac{z}{4}} \right)\zeta \left( {s - \frac{z}{2}} \right)\zeta \left( {s + \frac{z}{2}} \right)Z\left(s,\frac{z}{2}\right)\, ds}  -2\pi i\left(R_{1+\frac{z}{2}}+R_{1-\frac{z}{2}}\right)\right) \nonumber \\
   &= \frac{1}{{4i}}\bigg(\sum\limits_{n = 1}^\infty {\sigma _{ - z}}(n){{n^{z/2}}\int_{(\frac{5}{4} )} {\Gamma \left( {\frac{s}{2} - \frac{z}{4}} \right)\Gamma \left( {\frac{s}{2} + \frac{z}{4}} \right){{(\pi n)}^{ - s}}Z\left(s,\frac{z}{2}\right)\, ds} } \nonumber\\
	&\quad\quad-2\pi i\left({\pi ^{-\frac{(1 + z)}{2}}}\Gamma \left( {\frac{{1 + z}}{2}} \right)\zeta (1 + z)Z\left(1 + \frac{z}{2},\frac{z}{2}\right) +{\pi ^{-\frac{(1 - z)}{2}}}\Gamma \left( {\frac{{1 - z}}{2}} \right)\zeta (1 - z)Z\left(1 - \frac{z}{2},\frac{z}{2}\right)\right)\bigg)  \nonumber \\
   &= \frac{\pi }{2}\sum\limits_{n = 1}^\infty  {{\sigma _{ - z}}(n){{n^{z/2}}}\Theta\left(\pi n,\frac{z}{2}\right)} \nonumber\\
	&\quad- \frac{\pi }{2}\left({\pi ^{z/2}}\Gamma \left( {\frac{{-z}}{2}} \right)\zeta (-z)Z\left(1 + \frac{z}{2},\frac{z}{2}\right) + {\pi ^{-z/2}}\Gamma \left( {\frac{{z}}{2}} \right)\zeta (z)Z\left(1-\frac{z}{2},\frac{z}{2}\right)\right),
\end{align*}
where in the penultimate step, we used the part (ii)  of Lemma \ref{FEandEstimate} and \eqref{strivert} to see that the integrals along the horizontal segments of the contour $[\frac{1}{2}-iT, \frac{5}{4}-iT, \frac{5}{4}+iT, \frac{1}{2}+iT, \frac{1}{2}-iT]$ tend to zero as $T\to\infty$, then interchanged the order of summation and integration because of absolute convergence, and in the ultimate step we used \eqref{zetafe} twice.

This completes the proof of Theorem \ref{ramguigene}.

In the following corollary and at few other places in the sequel, we use the notation $\Theta(x)=\Theta(x,0)$ and $Z(s)=Z(s,0)$.
\begin{corollary}
Let $d(n)$ denote the number of positive divisors of $n$. Then
\begin{align*}
\frac{32}{\pi}\int_0^\infty  {{\Xi ^2}\left( {\frac{t}{2}} \right)Z\left( {\frac{{1 + it}}{2}} \right)\frac{{dt}}{{{{(1 + {t^2})}^2}}}}  = {\sum\limits_{n = 1}^\infty  {d(n)\Theta (\pi n)}  - (Z'(1) + (\gamma  - \log 4\pi )Z(1))}.
\end{align*}
\end{corollary}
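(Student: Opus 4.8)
The plan is to deduce this corollary from Theorem \ref{ramguigene} by letting $z\to 0$. For $-1/2<\text{Re}(z)<1/2$ the identity \eqref{ramguigeneid} holds, and both the integral on its left-hand side and the series $\sum_{n=1}^\infty \sigma_{-z}(n)n^{z/2}\Theta(\pi n,\tfrac{z}{2})$ are regular in $z$ in a neighbourhood of the origin; the latter because $\Theta(\cdot,\tfrac{z}{2})\in\Diamond_{\eta,\omega}$ and $\sigma_{-z}(n)\ll_{\varepsilon} n^{\varepsilon}$ uniformly for $z$ near $0$, so the series converges uniformly and may be evaluated term by term. Since $\sigma_0(n)=d(n)$ and $n^{z/2}\to 1$, the left side tends to $\tfrac{32}{\pi}\int_0^\infty \Xi^2(\tfrac{t}{2})Z(\tfrac{1+it}{2})\,dt/(1+t^2)^2$ and the series tends to $\sum_{n=1}^\infty d(n)\Theta(\pi n)$, where $Z(s)=Z(s,0)$ and $\Theta(x)=\Theta(x,0)$. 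Thus the corollary will follow once I show that $R(z)$ in \eqref{arz} has the finite limit $R(0)=Z'(1)+(\gamma-\log 4\pi)Z(1)$; indeed, the identity itself forces $R(z)$ to be regular at $z=0$, and the point is to compute its value there explicitly.

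The heart of the matter is therefore the evaluation of $\lim_{z\to 0}R(z)$. Each of the two summands of $R(z)$ has a simple pole at $z=0$: the first through $\Gamma(-\tfrac{z}{2})$ and the second through $\Gamma(\tfrac{z}{2})$, while both $\zeta(\mp z)$ approach $\zeta(0)=-\tfrac12\neq 0$. I would insert the expansions
\[
\Gamma\!\left(\pm\tfrac{z}{2}\right)=\pm\tfrac{2}{z}-\gamma+O(z),\quad \zeta(\pm z)=-\tfrac{1}{2}\pm\zeta'(0)z+O(z^{2}),\quad \pi^{\pm z/2}=1\pm\tfrac{z}{2}\log\pi+O(z^{2}),
\]
each valid for either choice of sign, chosen appropriately in the two summands, together with the two-variable Taylor expansions $Z(1\pm\tfrac{z}{2},\tfrac{z}{2})=Z(1)\pm\tfrac{z}{2}\partial_s Z(1,0)+\tfrac{z}{2}\partial_z Z(1,0)+O(z^{2})$, where $\partial_s$ and $\partial_z$ denote the derivatives in the first and second arguments. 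Multiplying out each summand through order $z^0$, the two residues $\pm Z(1)/z$ cancel, and the surviving constant term is $\partial_s Z(1,0)+(\log\pi+4\zeta'(0)+\gamma)Z(1)$. Using $\zeta'(0)=-\tfrac12\log(2\pi)$ one gets $\log\pi+4\zeta'(0)+\gamma=\gamma-\log(4\pi)$, giving $R(0)=Z'(1)+(\gamma-\log 4\pi)Z(1)$ as claimed, with $Z'(1)=\partial_s Z(1,0)$.

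The main obstacle I anticipate is the bookkeeping in the second paragraph: tracking the constant terms after multiplying the four factors in each summand, and in particular verifying the cancellation of the $\partial_z Z(1,0)$ contributions. This cancellation hinges on the opposite signs of $\pm\tfrac{z}{2}$ in the \emph{first} arguments of $Z$ against the identical $+\tfrac{z}{2}$ in the \emph{second} arguments, so that upon taking the difference of the two constant terms only the genuine $s$-derivative $Z'(1)$ remains while the $\partial_z$-contribution drops out. A minor secondary point is the justification of passing to the limit under the integral and the summation sign; for the integral this follows from dominated convergence using the bound in part (ii) of Lemma \ref{FEandEstimate} together with Stirling's formula \eqref{strivert}, and for the series from the uniform convergence noted in the first paragraph.
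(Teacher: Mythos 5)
Your proposal is correct and follows essentially the same route as the paper: let $z\to 0$ in Theorem \ref{ramguigene} and expand $R(z)$ about $z=0$ to find $R(z)=(\gamma-\log 4\pi)Z(1)+Z'(1)+O(z)$. Your explicit bookkeeping (including the cancellation of the $\partial_z Z(1,0)$ terms and the use of $\zeta'(0)=-\tfrac12\log(2\pi)$) checks out and simply fills in details the paper leaves implicit.
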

\begin{proof}
Let $z\to 0$ in Theorem \ref{ramguigene}, and note that 
the expansion around $z=0$ of $R(z)$, defined in \eqref{arz}, is given by
\begin{equation*}
R(z) =  (\gamma  - \log 4\pi )Z(1) + Z'(1) + O(z),
\end{equation*}
where $\gamma$ is Euler's constant.
\end{proof}

\subsection{Proof of the integral identity connected with the Ramanujan-Guinand formula}\label{rg}
Corollary \ref{ramgenecor} is proved here. As mentioned before its statement, we substitute $\varphi(x,z)=K_{z}(2\alpha x)$ in \eqref{recip2}, where $\a>0$. The fact that $K_{z}(x)\in\Diamond_{\eta,\omega}$ for any $\eta>0$ is obvious from \eqref{kasym}. Let $\b=1/\a$. From \eqref{koshlyakov-1}, it is easy to see then that $\psi(x, z)=\beta K_{z}(2\beta x)$. For Re$(s) > \pm $ Re$(\nu)$ and $q>0$, we have \cite[p.~115, Equation (11.1)]{ober}
\begin{equation}\label{melbess}
\int_{0}^{\infty}x^{s-1}K_{\nu}(qx)\, dx=2^{s-2}q^{-s}\Gamma\left(\frac{s-\nu}{2}\right)\Gamma\left(\frac{s+\nu}{2}\right).
\end{equation}
Using \eqref{defZ1}, \eqref{defZ2}, \eqref{melbess}, and the fact that $\a\b=1$, we see that
\begin{equation}\label{z1spl}
Z_1(s,z)=\frac{1}{4}\a^{-s}\hspace{2mm}\text{and}\hspace{2mm}Z_2(s,z)=\frac{1}{4}\a^{s-1}
\end{equation}
so that
\begin{equation}
Z\left(\frac{1+it}{2},\frac{z}{2}\right)=\frac{1}{2\sqrt{\a}}\cos\left(\frac{1}{2}t\log\a\right)
\end{equation}
and
\begin{equation}\label{thspl}
\Theta\left(\pi n,\frac{z}{2}\right)=K_{z/2}(2\pi n\a)+\b K_{z/2}(2\pi n\b).
\end{equation}
Then Theorem \ref{ramguigene} gives
\begin{align*}
-\frac{32}{\pi}\int_{0}^{\infty}\Xi\left(\frac{t+iz}{2}\right)\Xi\left(\frac{t-iz}{2}\right)\frac{\cos\left(\frac{1}{2}t\log\alpha\right)\, dt}{(t^2+(z+1)^2)(t^2+(z-1)^2)}=\frac{1}{2}\left(\mathfrak{F}(\a, z)+\mathfrak{F}(\b, z)\right),
\end{align*}
where
\begin{align*}
\mathfrak{F}(\a, z):=\sqrt{\alpha}\left(\alpha^{\frac{z}{2}-1}\pi^{\frac{-z}{2}}\Gamma\left(\frac{z}{2}\right)\zeta(z)+\alpha^{-\frac{z}{2}-1}\pi^{\frac{z}{2}}\Gamma\left(\frac{-z}{2}\right)\zeta(-z)-4\sum_{n=1}^{\infty}\sigma_{-z}(n)n^{z/2}K_{\frac{z}{2}}\left(2n\pi\alpha\right)\right).
\end{align*}
Now the Ramanujan-Guinand formula \cite{guinand}, \cite[p.~253]{lnb} gives, for $ab=\pi^2$,
\begin{multline}\label{mainagain}
\sqrt{a}\sum_{n=1}^{\infty}\sigma_{-z}(n)n^{z/2}K_{z/2}(2na)
-\sqrt{b}\sum_{n=1}^{\infty}\sigma_{-z}(n)n^{z/2}K_{z/2}(2nb)\\
=\df{1}{4}\Gamma\left(\dfrac{z}{2}\right)\zeta(z)\{b^{(1-z)/2}-a^{(1-z)/2}\}
+\df{1}{4}\Gamma\left(-\dfrac{z}{2}\right)\zeta(-z)\{a^{(1+z)/2}-a^{(1+z)/2}\}.
\end{multline}
(See \cite{bls} for history and other details.)

Invoking \eqref{mainagain}, with $a=\pi\a$ and $b=\pi\b$, it is seen that $\mathfrak{F}(\a, z)=\mathfrak{F}(\b, z)$, with the help of which we deduce that, for $-1/2<$ Re$(z)<1/2$,
\begin{align*}
-\frac{32}{\pi}\int_{0}^{\infty}\Xi\left(\frac{t+iz}{2}\right)\Xi\left(\frac{t-iz}{2}\right)\frac{\cos\left(\frac{1}{2}t\log\alpha\right)\, dt}{(t^2+(z+1)^2)(t^2+(z-1)^2)}
=\mathfrak{F}(\a, z).
\end{align*}
Since both sides of the above identity are analytic for $-1<$ Re$(z)<1$, by the principle of analytic continuation, the result holds for $-1<$ Re$(z)<1$ as well. This proves Corollary \ref{ramgenecor}.

\section{Generalization of an integral identity involving infinite series of Hurwitz zeta function}\label{1.3}
Theorem \ref{hurgene} is proved here. To see that the series on the right-hand side of \eqref{hurgeneid} is convergent, it suffices to show that 
\begin{equation*}
\mathfrak{P}(z):=\sum\limits_{n = 1}^\infty  {{\sigma _{ - z}}(n){n^{z + 1}}\int_0^1  {\Theta \left(x,\frac{z}{2}\right)\frac{{{x^{1 + z/2}}}}{{{{({x^2} + {\pi ^2}{n^2})}^{(z + 3)/2}}}}\, dx} }
\end{equation*}
converges. Since $\Theta\in\Diamond_{\eta,\omega}$, $\Theta\left(x,\frac{z}{2}\right)\ll x^{-\delta}$ for $0<x<1$ and for every $\delta>0$, so that the inside integral does not blow up as $x\to 0$. Thus,
\begin{align}
\mathfrak{P}(z)\ll\sum_{n=1}^{\infty}\frac{\sigma_{-\text{Re}(z)}(n)}{n^{2}}.
\end{align}
The series on the right-hand side converges as long as Re$(z) > -1$ as can be seen from \eqref{doublezeta}. Let 
\begin{equation*}
\phi(s, z)=\frac{1}{s+(z+1)/2}\G\left(\frac{z-1}{4}+\frac{s}{2}\right).
\end{equation*}
Using \eqref{fdefi} and \eqref{imp}, we see that
\begin{align}\label{adone}
&4\int_0^\infty  \Gamma \left( {\frac{{ z- 1 + it}}{4}} \right)\Gamma \left( {\frac{{z - 1 - it }}{4}} \right)\Xi \left( {\frac{{t + iz}}{2}} \right)\Xi \left( {\frac{{t - iz}}{2}} \right)Z\left( {\frac{{1 + it}}{2}},\frac{z}{2} \right)\frac{dt}{{{{t^2} + {{(z+1)}^2}}}}  \nonumber \\
&=  - \frac{1}{4i}\int_{\left(\tfrac{1}{2}\right)} \left(s - \frac{z}{2}\right)\left(s - 1 + \frac{z}{2}\right){\Gamma \left( \frac{s}{2} + \frac{z}{4} - \frac{1}{2} \right)\Gamma \left( \frac{z}{4} - \frac{s}{2} \right)}\Gamma \left( {\frac{s}{2} - \frac{z}{4}} \right)\Gamma \left( {\frac{s}{2} + \frac{z}{4}} \right)  \nonumber \\
   &\quad\quad\quad\quad\quad\quad\times\zeta \left( {s - \frac{z}{2}} \right)\zeta \left( {s + \frac{z}{2}} \right)Z\left(s, \frac{z}{2}\right){\pi ^{ - s}}\, ds \nonumber \\
  &=  \frac{1}{i}\bigg(\int_{\left(\tfrac{5}{4}\right)} {\Gamma \left( \frac{s}{2} + \frac{z}{4} + \frac{1}{2} \right)\Gamma \left( \frac{z}{4} - \frac{s}{2}+1 \right)}\Gamma \left( {\frac{s}{2} - \frac{z}{4}} \right)\Gamma \left( {\frac{s}{2} + \frac{z}{4}} \right)  \nonumber \\
   &\quad\quad\quad\quad\quad\times\zeta \left( {s - \frac{z}{2}} \right)\zeta \left( {s + \frac{z}{2}} \right)Z\left(s, \frac{z}{2}\right){\pi ^{ - s}}\, ds-2\pi i\left(R_{1+\frac{z}{2}}+R_{1-\frac{z}{2}}\right)\bigg)\nonumber\\
	&=  \frac{1}{i}\bigg(\sum_{n=1}^{\infty}\sigma_{-z}(n)n^{z/2}\int_{\left(\tfrac{5}{4}\right)} {\Gamma \left( \frac{s}{2} + \frac{z}{4} + \frac{1}{2} \right)\Gamma \left( \frac{z}{4} - \frac{s}{2}+1 \right)}\Gamma \left( {\frac{s}{2} - \frac{z}{4}} \right)\Gamma \left( {\frac{s}{2} + \frac{z}{4}} \right)  \nonumber \\
   &\quad\quad\quad\quad\quad\quad\quad\quad\quad\quad\quad\quad\times Z\left(s, \frac{z}{2}\right){(\pi n)^{ - s}}\, ds-2\pi i\left(R_{1+\frac{z}{2}}+R_{1-\frac{z}{2}}\right)\bigg),
\end{align}
where in the penultimate step we used the functional equation $\G(w+1)=w\G(w)$ of the Gamma function. Here $R_{1+\frac{z}{2}}$ is the residue of the integrand at the pole of order $1$ due to $\zeta\left(s-\frac{z}{2}\right)$, and $R_{1-\frac{z}{2}}$ is the residue of the integrand at the pole of order $1$ due to $\zeta\left(s-\frac{z}{2}\right)$. These residues turn out to be
\begin{align}\label{2res}
R_{1+\frac{z}{2}}&=2^{-z}\pi^{(1-z)/2}\G(1+z)\zeta(1+z)Z\left(1+\frac{z}{2},\frac{z}{2}\right),\nonumber\\
R_{1-\frac{z}{2}}&=2^{1-z}\pi^{(1-z)/2}\G(z)\zeta(z)Z\left(1-\frac{z}{2},\frac{z}{2}\right).
\end{align}
Note that for $0<$ Re $u<$ Re $w$, Euler's beta integral is given by
\begin{equation*}
\int_{0}^{\infty}\frac{x^{u-1}}{(1+x)^{w}}\, dx=\frac{\Gamma(u)\Gamma(w-u)}{\Gamma(w)},
\end{equation*}
so that for $-$ Re$\left(\frac{z}{2}\right)-1<d=$ Re$(s)<$ Re$\left(\frac{z}{2}\right)+2$,
\begin{equation*}
\frac{1}{2\pi i}\int_{(d)}\Gamma \left( \frac{s}{2} + \frac{z}{4} + \frac{1}{2} \right)\Gamma \left( \frac{z}{4} - \frac{s}{2}+1 \right)x^{-s}\, ds=2\G\left(\frac{z+3}{2}\right)\frac{x^{(z+2)/2}}{(1+x^2)^{(z+3)/2}}.
\end{equation*}
Along with \eqref{zthph} and \eqref{par}, this gives
\begin{align}\label{adone2}
&\int_{\left(\tfrac{5}{4}\right)} {\Gamma \left( \frac{s}{2} + \frac{z}{4} + \frac{1}{2} \right)\Gamma \left( \frac{z}{4} - \frac{s}{2}+1 \right)}\Gamma \left( {\frac{s}{2} - \frac{z}{4}} \right)\Gamma \left( {\frac{s}{2} + \frac{z}{4}} \right)Z\left(s, \frac{z}{2}\right){(\pi n)^{ - s}}\, ds\nonumber\\
&=4\pi i\G\left(\frac{z+3}{2}\right)(\pi n)^{(z+2)/2}\int_{0}^{\infty}{\Theta \left(x,\frac{z}{2}\right)\frac{{{x^{1 + z/2}}}}{{{{({x^2} + {\pi ^2}{n^2})}^{(z + 3)/2}}}}\, dx}.
\end{align}
From \eqref{adone}, \eqref{2res} and \eqref{adone2}, we obtain \eqref{hurgeneid}. This completes the proof of Theorem \ref{hurgene}.

\begin{corollary}
Let $d(n)=\sum_{d|n}1$ as before. Then,
\begin{align*}
   & \pi ^{-3/2}\int_0^\infty  {{{\left| {\Gamma \left( {\frac{{ - 1 + it}}{4}} \right)} \right|}^2}{\Xi ^2}\left( {\frac{t}{2}} \right)Z\left( {\frac{{1 + it}}{2}} \right)\frac{{dt}}{{1 + {t^2}}}}  \nonumber \\
   &= \frac{\pi}{2} \sum\limits_{n = 1}^\infty  {nd(n)\int_0^\infty  {\frac{x\Theta (x)}{{{{	({x^2} + {\pi ^2}{n^2})}^{3/2}}}}dx} }-\frac{1}{2}\left((\gamma  - \log (2\pi ))Z(1) + Z'(1)\right).  \nonumber 
\end{align*} 
\end{corollary}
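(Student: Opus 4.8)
The plan is to obtain this corollary as the limiting case $z\to 0$ of Theorem \ref{hurgene}, so the first task is to verify that every piece of \eqref{hurgeneid} passes to the limit and then to resolve the one genuinely delicate term, namely $S(z)$. On the left-hand side, letting $z\to 0$ is immediate: the prefactor $\pi^{(z-3)/2}\to\pi^{-3/2}$, the denominator $t^2+(z+1)^2\to 1+t^2$, the two $\Xi$-factors collapse to $\Xi^2(t/2)$, and $Z\!\left(\tfrac{1+it}{2},\tfrac{z}{2}\right)\to Z\!\left(\tfrac{1+it}{2}\right)$ in the notation $Z(s)=Z(s,0)$. For the Gamma factors I would use that, for real $t$, $\tfrac{-1-it}{4}=\overline{\bigl(\tfrac{-1+it}{4}\bigr)}$ together with $\Gamma(\bar w)=\overline{\Gamma(w)}$, so that $\Gamma\!\left(\tfrac{z-1+it}{4}\right)\Gamma\!\left(\tfrac{z-1-it}{4}\right)\to\bigl|\Gamma\!\left(\tfrac{-1+it}{4}\right)\bigr|^2$. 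On the right-hand side the series term passes to the limit using $\sigma_0(n)=d(n)$, $n^{z+1}\to n$, $\Theta(x,z/2)\to\Theta(x)$, and $\pi^{z+1/2}\Gamma\!\left(\tfrac{z+3}{2}\right)\to\pi^{1/2}\Gamma(3/2)=\pi/2$, reproducing exactly $\tfrac{\pi}{2}\sum_n n\,d(n)\int_0^\infty x\Theta(x)(x^2+\pi^2n^2)^{-3/2}\,dx$. Passage to the limit under the integral and the sum is justified by the estimates of Section \ref{prelim} (or simply by analyticity in $z$ in a neighbourhood of $0$, which forces the apparent singularity below to be removable).

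The heart of the matter is $\lim_{z\to0}S(z)$, where each of the two summands of $S(z)$ is individually singular at $z=0$: the first carries the simple pole of $\zeta(1+z)$ and the second the simple pole of $\Gamma(z)$. I would write $S(z)=A(z)+B(z)$ with $A(z)=2^{-1-z}\Gamma(1+z)\zeta(1+z)\,g(z)$ and $B(z)=2^{-z}\Gamma(z)\zeta(z)\,h(z)$, where $g(z):=Z\!\left(1+\tfrac z2,\tfrac z2\right)$ and $h(z):=Z\!\left(1-\tfrac z2,\tfrac z2\right)$. Both $g$ and $h$ are analytic at $z=0$ with $g(0)=h(0)=Z(1)$, and differentiating the two-variable function gives $g'(0)=\tfrac12 Z'(1)+\tfrac12 D_2Z(1,0)$ and $h'(0)=-\tfrac12 Z'(1)+\tfrac12 D_2Z(1,0)$, where $D_2$ denotes the derivative of $Z$ in its second argument. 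The crucial point is that this unwanted derivative $D_2Z(1,0)$ cancels in the combination, leaving $g'(0)-h'(0)=Z'(1)$ with no stray second-variable contribution.

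It then remains to Laurent-expand the two analytic prefactors. Using $\zeta(1+z)=\tfrac1z+\gamma+O(z)$ and $\Gamma(1+z)=1-\gamma z+O(z^2)$ one finds $\Gamma(1+z)\zeta(1+z)=\tfrac1z+O(z)$, whence $2^{-1-z}\Gamma(1+z)\zeta(1+z)=\tfrac{1}{2z}-\tfrac{\log2}{2}+O(z)$; similarly, from $\Gamma(z)=\tfrac1z-\gamma+O(z)$ and $\zeta(z)=-\tfrac12+\zeta'(0)z+O(z^2)$ one gets $2^{-z}\Gamma(z)\zeta(z)=-\tfrac{1}{2z}+\bigl(\zeta'(0)+\tfrac{\gamma}{2}+\tfrac{\log2}{2}\bigr)+O(z)$. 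Multiplying by $g(z)$ and $h(z)$ respectively and adding, the $\tfrac{1}{2z}$ poles cancel (the expected analyticity check), the $\log2$ contributions cancel, and the constant term collapses to $\tfrac12 Z'(1)+\bigl(\zeta'(0)+\tfrac{\gamma}{2}\bigr)Z(1)$. Substituting $\zeta'(0)=-\tfrac12\log(2\pi)$ gives $\lim_{z\to0}S(z)=\tfrac12\bigl((\gamma-\log(2\pi))Z(1)+Z'(1)\bigr)$, and since the corollary's right-hand side contains $-S(z)$, this produces the stated constant.

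The main obstacle is precisely this confluence of simple poles at $z=0$: because the first and second summands of $S(z)$ blow up individually, one cannot simply set $z=0$, and must instead track the $O(1)$ terms of both factors simultaneously. The proof rides on the two built-in cancellations—of the residues $\pm\tfrac{1}{2z}$ and of the spurious second-variable derivative $D_2Z(1,0)$—which together guarantee that the limit is finite and equal to $\tfrac12\bigl(Z'(1)+(\gamma-\log(2\pi))Z(1)\bigr)$, as required.
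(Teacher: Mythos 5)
Your proposal is correct and follows the same route as the paper, which simply lets $z\to 0$ in Theorem \ref{hurgene} and records that $\lim_{z\to 0}S(z)=\tfrac12\left((\gamma-\log(2\pi))Z(1)+Z'(1)\right)$; your Laurent expansion of the two individually singular summands of $S(z)$, including the cancellation of the $\pm\tfrac{1}{2z}$ poles and of the second-variable derivative $D_2Z(1,0)$, is exactly the computation the paper leaves implicit, and it checks out.
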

\begin{proof}
Let $z\to 0$ in Theorem \ref{hurgene}, and note that 
\begin{equation*}
S(0)=\frac{1}{2}\left((\gamma  - \log (2\pi ))Z(1) + Z'(1)\right).
\end{equation*}
\end{proof}

\subsection{Proof of the integral identity involving infinite series of Hurwitz zeta function}\label{hur}
We now prove Corollary \ref{hurr}. We again choose the pair $\left(\phi(x,z),\psi(x,z)\right)=\left(K_{z}(2\alpha x),\beta K_{z}(2\beta x)\right)$ where $\a\b=1$. So from \eqref{z1spl}-\eqref{thspl},
\begin{align}\label{dun1}
&\frac{\pi^{\frac{z-3}{2}}}{2\sqrt{\a}}\int_{0}^{\infty}\Gamma\left(\frac{z-1+it}{4}\right)\Gamma\left(\frac{z-1-it}{4}\right)
\Xi\left(\frac{t+iz}{2}\right)\Xi\left(\frac{t-iz}{2}\right)\frac{\cos\left( \tf{1}{2}t\log\a\right)}{t^2+(z+1)^2}\, dt\nonumber\\
&={\pi ^{z+1/2}}\Gamma \left( {\frac{{z + 3}}{2}} \right)\sum\limits_{n = 1}^\infty  {{\sigma _{ - z}}(n){n^{z + 1}}\int_0^\infty  {\frac{{{x^{1 + z/2}}}}{{{{({x^2} + {\pi ^2}{n^2})}^{(z + 3)/2}}}}\left(K_{z/2}(2\a x)+\b K_{z/2}(2\b x)\right)\, dx} }\nonumber\\
&\quad-\left({2^{-3 - z}}\Gamma (1 + z)\zeta (1 + z)\left(\a^{-1 - \frac{z}{2}}+\a^{\frac{z}{2}}\right) + 2^{-2-z}\G(z)\zeta(z)\left(\a^{-1 + \frac{z}{2}}+\a^{-\frac{z}{2}}\right)\right).   
\end{align}
For Re$(a)>0$, Re$(b)>0$ and Re$(\nu)>-1$, we have \cite[p.~678, formula \textbf{6.565.7}]{gr}
\begin{align*}
\int_0^\infty  {{x^{1 + \nu}}{{({x^2} + a^2)}^{\mu}}{K_{\nu}}(bx)\, dx} = {2^{\nu}}\Gamma (\nu + 1){a^{\nu + \mu + 1}}{b^{ - 1 - \mu}}{S_{\mu - \nu,\mu + \nu + 1}}(ab),
	\end{align*}
whereas from the footnote on the first page of \cite{gl}, we have
\begin{equation*}
S_{\mu,\nu}(w)=w^{\mu+1}\int_{0}^{\infty}te^{-wt}{}_2F_{1}\left(\frac{1-\mu+\nu}{2},\frac{1-\mu-\nu}{2};\frac{3}{2};-t^2\right)\, dt.
\end{equation*}
Thus
\begin{equation}\label{lom2}
\int_0^\infty  {{x^{1 + \nu}}{{({x^2} + a^2)}^{\mu}}{K_{\nu}}(bx)\, dx}={a^{2\mu + 2}}{\left( {\frac{2}{b}} \right)^{\nu}}\Gamma (\nu + 1)\int_0^\infty  {y{e^{ - aby}}_2{F_1}\left( {1 + \nu, - \mu,\frac{3}{2}; - {y^2}} \right)\, dy}. 
\end{equation}
Let $a = \pi n$, $b = 2 \a$, $u=-(3+z)/2$ and $v = z/2$ in \eqref{lom2} so that
\begin{align*}
  \int_0^\infty  {\frac{{{x^{1 + z/2}}{K_{z/2}}(2\a x)}}{{{{({x^2} + {\pi ^2}{n^2})}^{(3 + z)/2}}}}\, dx}  & = \frac{{{{(\pi n)}^{ - 1 - z}}{\a ^{ - z/2}}}}{{1 + z}}\Gamma \left( 1+{\frac{z}{2}} \right)\nonumber\\
	&\quad\quad\times\int_0^\infty  {\frac{{{e^{ - 2\pi n\a y}}}}{{{{(1 + {y^2})}^{(1 + z)/2}}}}\sin ((1 + z)\arctan y)\, dy},
\end{align*}
since \cite[p.~1006, formula \textbf{9.121.4}]{gr}
\begin{equation*}
{}_2F_{1}\left(\frac{1-a}{2},\frac{2-a}{2};\frac{3}{2};\frac{w^2}{u^2}\right)=\frac{(u+w)^a-(u-w)^a}{2awu^{a-1}}.
\end{equation*}
Thus
\begin{align}\label{dun2.5}
&{\pi ^{z+1/2}}\Gamma \left( {\frac{{z + 3}}{2}} \right)\sum\limits_{n = 1}^\infty  {\sigma _{ - z}}(n){n^{z + 1}}\int_0^\infty  \frac{{{x^{1 + z/2}}}}{{{{({x^2} + {\pi ^2}{n^2})}^{(z + 3)/2}}}}K_{z/2}(2\a x)\, dx\nonumber\\
&=\frac{\a^{-z/2}}{2\sqrt{\pi}}\G\left(\frac{z+1}{2}\right)\G\left(1+\frac{z}{2}\right)\sum_{m=1}^{\infty}m^{-z}\int_{0}^{\infty}\sum_{k=1}^{\infty}e^{-2\pi mk\a y}\frac{\sin ((1 + z)\arctan y)\, dy}{(1 + {y^2})^{(1 + z)/2}}\nonumber\\
&=\frac{\a^{z/2}}{2\sqrt{\pi}}\G\left(\frac{z+1}{2}\right)\G\left(1+\frac{z}{2}\right)\sum_{m=1}^{\infty}\int_{0}^{\infty}\frac{\sin\left((1 + z)\arctan\left(\frac{x}{m\a}\right)\right)}{(x^2 + {m^2\a^2})^{(1 + z)/2}}\frac{dx}{e^{2\pi x}-1}\nonumber\\
&=\frac{\a^{z/2}}{2^{z+2}}\G(z+1)\sum_{m=1}^{\infty}\left(\zeta(z+1,m\a)-\frac{(m\a)^{-z}}{2}-\frac{(m\a)^{-z}}{z}\right),
\end{align}
where in the last step, we used \eqref{dup},
and Hermite's formula for the Hurwitz zeta function \cite[p.~609, formula \textbf{25.11.29}]{nist}, namely,
\begin{equation*}
\zeta(w,a)=\frac{1}{2}a^{-w}+\frac{a^{1-w}}{w-1}+2\int_{0}^{\infty}\frac{\sin\left(w\arctan(t/a)\right)}{(a^2+t^2)^{w/2}}\frac{dt}{e^{2\pi t}-1},
\end{equation*}
which is valid for $w\neq 1$ and Re$(a)>0$.

Hence from \eqref{dun1}, \eqref{dun2.5}, and the fact that $\a\b=1$, we deduce that

\begin{align}\label{dun3}
&\frac{2^{z}\pi^{\frac{z-3}{2}}}{\G(z+1)}\int_{0}^{\infty}\Gamma\left(\frac{z-1+it}{4}\right)\Gamma\left(\frac{z-1-it}{4}\right)
\Xi\left(\frac{t+iz}{2}\right)\Xi\left(\frac{t-iz}{2}\right)\frac{\cos\left( \tf{1}{2}t\log\a\right)}{t^2+(z+1)^2}\, dt\nonumber\\
&=\frac{1}{2}\bigg\{\a^{\frac{z+1}{2}}\left(\sum_{n=1}^{\infty}\lambda(n\a, z)-\frac{\zeta(z+1)}{2\alpha^{z+1}}-\frac{\zeta(z)}{\alpha z}\right)+\b^{\frac{z+1}{2}}\left(\sum_{n=1}^{\infty}\lambda(n\b, z)-\frac{\zeta(z+1)}{2\beta^{z+1}}-\frac{\zeta(z)}{\beta z}\right)\bigg\},
\end{align}
where $\lambda(x, z)$ is defined in \eqref{dvarphi}.
To obtain \eqref{hurgeneid}, it only remains to show that 
\begin{align}\label{mainneq22}
\a^{\frac{z+1}{2}}\left(\sum_{n=1}^{\infty}\lambda(n\a, z)-\frac{\zeta(z+1)}{2\alpha^{z+1}}-\frac{\zeta(z)}{\alpha z}\right)=\b^{\frac{z+1}{2}}\left(\sum_{n=1}^{\infty}\lambda(n\b, z)-\frac{\zeta(z+1)}{2\beta^{z+1}}-\frac{\zeta(z)}{\beta z}\right).
\end{align}
The limiting case $z\to 0$ of this identity appears on page $220$ of Ramanujan's Lost Notebook \cite{lnb}, and is discussed in detail in \cite{bcbad}. 

In \cite{dixit} as well as in \cite{transf}, \eqref{mainneq22} was proved as a consequence of Corollary \ref{hurgene} and the fact that $\cos\left(\frac{1}{2}t\log\a\right)=\cos\left(\frac{1}{2}t\log\b\right)$ for $\a\b=1$. Hence to avoid circular reasoning, we must obtain a new proof of it which does not make use of the integral involving the Riemann $\Xi$-function present in this corollary.

The aforementioned limiting case was proved in \cite[Section 4]{bcbad} in the manner sought above using Guinand's generalization of the Poisson summation formula \cite[Theorem 1]{apg1}. This requires use of a result of Ramanujan \cite[Equation (1.4)]{bcbad} that the function $\psi(x+1)-\log x$ is self-reciprocal in the Fourier cosine transform, i.e., 
\begin{equation*}
\int_0^{\i}\left(\psi(1+x)-\log x\right)\cos(2\pi{yx})\, dx
=\df{1}{2}\left(\psi(1+y)-\log y\right).
\end{equation*}
For \eqref{mainneq22}, this method, however, does not look feasible as the one-variable generalization of $\psi(x+1)-\log x$ relevant to the problem, namely $x^{-z}/z-\zeta(z+1,x+1)$, is \emph{not} self-reciprocal in the Fourier cosine transform. Hence we prove \eqref{mainneq22} by first obtaining a new proof of the equivalent modular transformation in the first equality in the following result, also proved in \cite[Theorem 6.3]{dixitmoll} using the integral involving the $\Xi$-function. This new proof is, of course, independent of the use of this integral, and generalizes Koshliakov's proof for the case when $z=0$ \cite[p.~248]{koshli}.

\textit{
Assume $-1 <$ \textup{Re} $z < 1$. Let $\Omega(x, z)$ be defined by
\begin{equation*}
\Omega(x,z) := 2 \sum_{n=1}^{\infty} \sigma_{-z}(n) n^{z/2} 
\left( e^{\pi i z/4} K_{z}( 4 \pi e^{\pi i/4} \sqrt{nx} ) +
 e^{-\pi i z/4} K_{z}( 4 \pi e^{-\pi i/4} \sqrt{nx} ) \right),
\end{equation*}
where $\sigma_{-z}(n)=\sum_{d|n}d^{-z}$ and $K_\nu(z)$ denotes the modified Bessel function of order $\nu$. Then for $\alpha, \beta>0, \alpha\beta=1$, 
\begin{align}\label{genelkosh}
&\alpha^{(z+1)/2} 
\int_{0}^{\infty} e^{-2 \pi \alpha x} x^{z/2} 
\left( \Omega(x,z) - \frac{1}{2 \pi} \zeta(z) x^{z/2-1} \right)\, dx\nonumber \\
&=\beta^{(z+1)/2} 
\int_{0}^{\infty} e^{-2 \pi \beta x} x^{z/2} 
\left( \Omega(x,z) - \frac{1}{2 \pi} \zeta(z) x^{z/2-1} \right)\, dx.
\end{align}}

Upon proving \eqref{genelkosh}, we show that
\begin{align}\label{equi}
&\int_{0}^{\infty} e^{-2 \pi \alpha x} x^{z/2} \left( \Omega(x,z) - \frac{1}{2 \pi} \zeta(z) x^{z/2-1} \right) dx\nonumber\\
&=\frac{\G(z+1)}{(2\pi)^{z+1}}\sum_{n=1}^{\infty}\left(\lambda(n\a, z)-\frac{\zeta(z+1)}{2\alpha^{z+1}}-\frac{\zeta(z)}{\alpha z}\right),
\end{align}
thereby proving \eqref{mainneq22}. The special case of \eqref{equi} was obtained in \cite{ingenious}.

The function $\Omega(x,z)$ has many nice properties. For example, it has a very useful inverse Mellin transform representation \cite[Equation (6.6)]{dixitmoll}, valid for $c=$ Re $s>1\pm$ Re $\frac{z}{2}$:
\begin{equation}
\label{Mellin-omega}
\Omega(x,z) = \frac{1}{2 \pi i} 
\int_{c - i \infty} ^{c + i \infty} 
\frac{\zeta(1 - s + \tfrac{z}{2}) \zeta(1 -s  - \tfrac{z}{2} ) }
{2 \cos\left( \tfrac{1}{2} \pi \left( s + \tfrac{z}{2} \right)\right)} x^{-s} ds.
\end{equation}
It also satisfies the following identity \cite[Proposition 6.1]{dixitmoll} and Re $x>0$:
\begin{equation}\label{omegarep}
\Omega(x,z) = - \frac{\Gamma(z) \zeta(z)}{(2 \pi \sqrt{x})^{z}}  +
\frac{x^{z/2-1}}{2 \pi} \zeta(z) - 
\frac{x^{z/2}}{2} \zeta(z+1)  +
\frac{x^{z/2+1}}{\pi} \sum_{n=1}^{\infty} \frac{\sigma_{-z}(n)}{n^{2}+x^{2}}.
\end{equation}
Lastly, we mention that it plays a vital role in obtaining a very short proof of the extended version of the Vorono\"{\dotlessi} summation formula \cite[Theorem 6.1]{bdrz} in the case when the associated function is analytic in a specific region.

We begin with the following lemma, which is interesting in its own right, and shows that the function $\Omega(x,z)-\frac{1}{2\pi}\zeta(z)x^{\frac{z}{2}-1}$ is self-reciprocal when integrated against the Bessel function of the first kind of order $z$. It is a one variable generalization of a result of Koshliakov \cite[Equation (11)]{koshlisum}.
\begin{lemma}\label{selfomega}
Let $J_{\nu}(w)$ denote the Bessel function of the first kind of order $\nu$.  Let $-1 <$ \textup{Re} $z < 1$. For $\textup{Re}(x)>0$, we have
\begin{equation*}
\int_{0}^{\infty}J_{z}(4\pi\sqrt{xy})\left(\Omega(y,z)-\frac{1}{2\pi}\zeta(z)y^{\frac{z}{2}-1}\right)\, dy=\frac{1}{2\pi}\left(\Omega(x,z)-\frac{1}{2\pi}\zeta(z)x^{\frac{z}{2}-1}\right).
\end{equation*}
\end{lemma}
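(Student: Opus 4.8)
The plan is to evaluate the left-hand side by the Mellin--Parseval formula \eqref{par}, regarding both $J_{z}(4\pi\sqrt{xy})$ and
\[
g(y):=\Omega(y,z)-\frac{1}{2\pi}\zeta(z)y^{\frac{z}{2}-1}
\]
as functions of $y$ with $x$ held fixed. First I would record the Mellin transform of the Bessel kernel. Substituting $v=4\pi\sqrt{xy}$ and using the classical evaluation $\int_{0}^{\infty}v^{\mu-1}J_{z}(v)\,dv=2^{\mu-1}\Gamma\!\left(\frac{z+\mu}{2}\right)/\Gamma\!\left(\frac{z-\mu}{2}+1\right)$, one finds
\[
\int_{0}^{\infty}y^{s-1}J_{z}(4\pi\sqrt{xy})\,dy=\frac{1}{(4\pi^{2}x)^{s}}\,\frac{\Gamma\!\left(s+\frac{z}{2}\right)}{\Gamma\!\left(1-s+\frac{z}{2}\right)},
\]
valid in the strip $-\tfrac12\textup{Re}(z)<\textup{Re}(s)<\tfrac34$. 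For the other factor I would invoke \eqref{Mellin-omega}: the meromorphic function
\[
M(s):=\frac{\zeta\!\left(1-s+\frac{z}{2}\right)\zeta\!\left(1-s-\frac{z}{2}\right)}{2\cos\!\left(\frac{\pi}{2}\left(s+\frac{z}{2}\right)\right)}
\]
is the Mellin transform of $\Omega(\cdot,z)$ for $\textup{Re}(s)>1+|\textup{Re}(z/2)|$, i.e.\ to the right of its simple pole at $s=1-\tfrac{z}{2}$. The sole purpose of subtracting $\tfrac{1}{2\pi}\zeta(z)y^{z/2-1}$ is that, by \eqref{omegarep}, this power is exactly the boundary term governed by that pole; removing it shifts the fundamental strip of the Mellin transform past $s=1-\tfrac{z}{2}$, so that the same $M(s)$ now represents the Mellin transform of $g$ in the strip $|\textup{Re}(z/2)|<\textup{Re}(s)<1-\textup{Re}(z/2)$.

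I would then combine the two transforms through \eqref{par}. Choosing a line $\textup{Re}(s)=\sigma$ in the overlap $|\textup{Re}(z/2)|<\sigma<\min\{\tfrac34,\,1-|\textup{Re}(z/2)|\}$, which is nonempty precisely when $-1<\textup{Re}(z)<1$, Parseval gives
\[
\int_{0}^{\infty}J_{z}(4\pi\sqrt{xy})g(y)\,dy=\frac{1}{2\pi i}\int_{(\sigma)}\frac{1}{(4\pi^{2}x)^{s}}\,\frac{\Gamma\!\left(s+\frac{z}{2}\right)}{\Gamma\!\left(1-s+\frac{z}{2}\right)}\,\frac{\zeta\!\left(s+\frac{z}{2}\right)\zeta\!\left(s-\frac{z}{2}\right)}{2\sin\!\left(\frac{\pi}{2}\left(s-\frac{z}{2}\right)\right)}\,ds,
\]
where I used $M(1-s)=\zeta(s+\tfrac z2)\zeta(s-\tfrac z2)/\bigl(2\sin(\tfrac\pi2(s-\tfrac z2))\bigr)$. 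The heart of the argument is then purely algebraic: applying the functional equation \eqref{zetafe} in its asymmetric form $\zeta(w)=\chi(w)\zeta(1-w)$, with $\chi(w)=2^{w}\pi^{w-1}\sin(\tfrac{\pi w}{2})\Gamma(1-w)$, to both $\zeta(s\pm\tfrac z2)$ reinstates the factor $\zeta(1-s+\tfrac z2)\zeta(1-s-\tfrac z2)$, and then the reflection formula $\Gamma(s+\tfrac z2)\Gamma(1-s-\tfrac z2)=\pi/\sin(\pi(s+\tfrac z2))$ together with $\sin(\pi w)=2\sin(\tfrac{\pi w}{2})\cos(\tfrac{\pi w}{2})$ collapses all the elementary prefactors. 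I expect the entire integrand to reduce exactly to $\tfrac{1}{2\pi}M(s)x^{-s}$.

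Granting that identity, the conclusion follows by shifting the contour from $\textup{Re}(s)=\sigma$ rightward to a line $\textup{Re}(s)=c>1+|\textup{Re}(z/2)|$, where $\frac{1}{2\pi i}\int_{(c)}M(s)x^{-s}\,ds=\Omega(x,z)$ by \eqref{Mellin-omega}. The only pole crossed is the cosine's simple pole at $s=1-\tfrac z2$, whose residue (using $\zeta(0)=-\tfrac12$) is $\tfrac{\zeta(z)}{2\pi}x^{z/2-1}$. Hence
\[
\frac{1}{2\pi i}\int_{(\sigma)}M(s)x^{-s}\,ds=\Omega(x,z)-\frac{1}{2\pi}\zeta(z)x^{\frac z2-1}=g(x),
\]
so the left-hand side equals $\tfrac{1}{2\pi}g(x)$, which is the asserted identity. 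It is reassuring that this is the very same pole whose residue defined $g$, so the subtraction appears self-consistently on both sides.

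The main obstacle I anticipate is not this algebra but the analytic bookkeeping legitimizing the Mellin--Parseval step. I must read off from \eqref{omegarep} that $g(y)=O(y^{-|\textup{Re}(z/2)|})$ as $y\to0$ and $g(y)\sim-\tfrac{1}{2\pi}\zeta(z)y^{z/2-1}$ as $y\to\infty$ (the remaining terms being of smaller order), thereby pinning down the fundamental strip of its Mellin transform at $|\textup{Re}(z/2)|<\textup{Re}(s)<1-\textup{Re}(z/2)$, and check that this meets the Bessel strip $-\tfrac12\textup{Re}(z)<\textup{Re}(s)<\tfrac34$ --- which happens exactly for $-1<\textup{Re}(z)<1$. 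Equally delicate is justifying the interchange of integrations behind \eqref{par}: since $J_{z}(4\pi\sqrt{xy})=O(y^{-1/4})$ and $g(y)=O(y^{\textup{Re}(z)/2-1})$, the product is only conditionally integrable at infinity when $\tfrac12\le\textup{Re}(z)<1$, so I would exploit the oscillation of the Bessel function (via an integration-by-parts or Abel-summation argument, in the spirit of the Fubini verification in Lemma \ref{FEandEstimate}) to secure the required absolute convergence.
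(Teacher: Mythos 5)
Your proposal is correct and follows essentially the same route as the paper: the same Mellin transform of $J_{z}(4\pi\sqrt{xy})$, the same identification of $\zeta(1-s+\tfrac z2)\zeta(1-s-\tfrac z2)/\bigl(2\cos(\tfrac\pi2(s+\tfrac z2))\bigr)$ as the Mellin transform of $\Omega(y,z)-\tfrac{1}{2\pi}\zeta(z)y^{z/2-1}$ in the shifted strip, Parseval, and two applications of the functional equation to collapse the integrand to $\tfrac{1}{2\pi}M(s)x^{-s}$. Your closing contour shift back to $\textup{Re}(s)=c>1+|\textup{Re}(z/2)|$ is just the derivation of \eqref{Mellin-omega1} run in reverse, and your remark about the lack of absolute convergence of the Parseval interchange when $\textup{Re}(z)\ge\tfrac12$ flags a point the paper passes over in silence.
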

\begin{proof}
For $-$ Re $\left(\frac{z}{2}\right)<$ \textup{Re} $s<\frac{3}{4}$, we have \cite[p.~225]{dixitmoll}
\begin{equation*}
\int_{0}^{\infty} x^{s-1} J_{z}( 4 \pi \sqrt{xy} ) dx = 
2^{-2s} \pi^{-2s} y^{-s} 
\frac{\Gamma \left( s + \tfrac{z}{2} \right)}
{\Gamma \left( 1 - s + \tfrac{z}{2} \right)},
\end{equation*}
and from \eqref{Mellin-omega} and an application of the residue theorem, we find that for Re $s<1\pm$ Re $\left(\frac{z}{2}\right)$,
\begin{equation}\label{Mellin-omega1}
\int_{0}^{\infty}y^{s-1}\left(\Omega(y,z)-\frac{1}{2\pi}\zeta(z)y^{\frac{z}{2}-1}\right)=\frac{\zeta(1 - s + \tfrac{z}{2}) \zeta(1 -s  - \tfrac{z}{2} ) }
{2 \cos\left( \tfrac{1}{2} \pi \left( s + \tfrac{z}{2} \right)\right)}.
\end{equation}
Hence using Parseval's formula \cite[p.~83, Equation (3.1.11)]{kp}, we see that for $\pm$ \textup{Re} $\left(\frac{z}{2}\right)<c=$ \textup{Re} $s<\min\left(\frac{3}{4}, 1\pm\text{Re}\left(\frac{z}{2}\right)\right)$,
\begin{align*}
&\int_{0}^{\infty}J_{z}(4\pi\sqrt{xy})\left(\Omega(y,z)-\frac{1}{2\pi}\zeta(z)y^{\frac{z}{2}-1}\right)\, dy\nonumber\\
&=\frac{1}{2\pi i}\int_{(c)}\frac{(2\pi)^{-2s}x^{-s}\G\left(s+\frac{z}{2}\right)}{\G\left(1-s+\frac{z}{2}\right)}\frac{\zeta\left(s-\frac{z}{2}\right)\zeta\left(s+\frac{z}{2}\right)}{2\sin\left(\frac{\pi}{2}\left(s-\frac{z}{2}\right)\right)}\, ds\nonumber\\
&=\frac{1}{2\pi i}\int_{(c)}\frac{\zeta(1 - s + \tfrac{z}{2}) \zeta(1 -s  - \tfrac{z}{2} ) }
{4\pi \cos\left( \tfrac{1}{2} \pi \left( s + \tfrac{z}{2} \right)\right)} x^{-s} ds,
\end{align*}
where in the last step we used the functional equation for $\zeta(s)$ twice. The result now follows from \eqref{Mellin-omega1}.
\end{proof}
We now prove the modular transformation \ref{genelkosh}. Using Lemma \ref{selfomega}, we see that
\begin{align}\label{inttra}
&\alpha^{(z+1)/2} 
\int_{0}^{\infty} e^{-2 \pi \alpha x} x^{z/2} 
\left( \Omega(x,z) - \frac{1}{2 \pi} \zeta(z) x^{z/2-1} \right) dx\nonumber \\
&=2\pi\alpha^{(z+1)/2} \int_{0}^{\infty} e^{-2 \pi \alpha x} x^{z/2} \int_{0}^{\infty}J_{z}(4\pi\sqrt{xy})\left(\Omega(y,z)-\frac{1}{2\pi}\zeta(z)y^{\frac{z}{2}-1}\right)\, dy\, dx\nonumber\\
&=2\pi\alpha^{(z+1)/2}\int_{0}^{\infty}\left(\Omega(y,z)-\frac{1}{2\pi}\zeta(z)y^{\frac{z}{2}-1}\right)\int_{0}^{\infty} e^{-2 \pi \alpha x} x^{z/2}J_{z}(4\pi\sqrt{xy})\, dx\, dy,
\end{align}
where the interchange of the order of integration follows from Fubini's theorem.

Now use the formula \cite[p.~709, formula \textbf{6.643.1}]{gr}
\begin{equation*}
\int_{0}^{\infty}x^{\mu-\frac{1}{2}}e^{-ax}J_{2\nu}(2b\sqrt{x})\, dx=\frac{\G\left(\mu+\nu+\frac{1}{2}\right)}{b\G(2\nu+1)}e^{-\frac{b^2}{2a}}a^{-\mu}M_{\mu,\nu}\left(\frac{b^2}{a}\right),
\end{equation*}
which is valid for Re $\left(\mu+\nu+\frac{1}{2}\right)>0$, with $b=2\pi\sqrt{y}$, $\nu=z/2, \mu=(z+1)/2, a=2\pi\a$, and then the definition \cite[p.~1024]{gr} of the Whittaker function
\begin{equation}
M_{k,\mu}(z)=z^{\mu+\frac{1}{2}}e^{-z/2}{}_1F_{1}\left(\mu-k+\frac{1}{2};2\mu+1;z\right),
\end{equation}
to deduce that
\begin{equation}\label{intm}
\int_{0}^{\infty} e^{-2 \pi \alpha x} x^{z/2}J_{z}(4\pi\sqrt{xy})\, dx=\frac{e^{-2\pi y/\a}y^{z/2}}{2\pi\a^{z+1}}.
\end{equation}
Finally we obtain \eqref{genelkosh} from \eqref{inttra}, \eqref{intm} and the fact that $\a\b=1$.

It only remains to now prove \eqref{equi}. We first prove it for $0<$ Re $z<1$ and then extend it by analytic continuation. To that end, we use \eqref{omegarep} to evaluate the integral on the left-hand side of \eqref{equi}. Note that
\begin{equation}\label{genf-1}
\int_{0}^{\infty} e^{-2 \pi \alpha x} x^{z/2}\left(- \frac{\Gamma(z) \zeta(z)x^{-z/2}}{(2 \pi )^{z}}- \frac{x^{z/2}}{2} \zeta(z+1)\right)\, dx=-\frac{\G(z)\zeta(z)}{\a(2\pi)^{z+1}}-\frac{\G(z+1)\zeta(z+1)}{2(2\pi\a)^{z+1}}.
\end{equation}
Also,
\begin{align*}
\int_{0}^{\infty} e^{-2 \pi \alpha x} \frac{x^{z+1}}{\pi}\sum_{n=1}^{\infty}\frac{\sigma_{-z}(n)}{n^2+x^2}\, dx=\sum_{m=1}^{\infty}m^{-z}\int_{0}^{\infty}\frac{x^{z+1}}{\pi}e^{-2\pi\a x}\sum_{k=1}^{\infty}\frac{1}{x^2+m^2k^2}\, dx,
\end{align*}
where the interchange of the order of summation and integration is justified by absolute convergence. It is well known \cite[p.~191]{con} that for $t\neq 0$,
\begin{equation*}
2t\sum_{k=1}^{\infty}\frac{1}{t^2+4k^2\pi^2}=\frac{1}{e^{t}-1}-\frac{1}{t}+\frac{1}{2}.
\end{equation*}
So
\begin{align}\label{ds1}
\int_{0}^{\infty}e^{-2\pi\a x}\frac{x^{z+1}}{\pi}\sum_{n=1}^{\infty}\frac{\sigma_{-z}(n)}{x^2+n^2}\, dx&=\sum_{m=1}^{\infty}m^{-z}\int_{0}^{\infty}e^{-2\pi\a x}x^{z}\left(\frac{1}{m(e^{2\pi x/m}-1)}-\frac{1}{2\pi x}+\frac{1}{2m}\right)\, dx\nonumber\\
&=\sum_{m=1}^{\infty}\int_{0}^{\infty}e^{-2\pi\a mt}t^{z}\left(\frac{1}{e^{2\pi t}-1}-\frac{1}{2\pi t}+\frac{1}{2}\right)\, dt\nonumber\\
&=\frac{\G(z+1)}{(2\pi)^{z+1}}\sum_{m=1}^{\infty}\left\{\zeta(z+1,m\a)-\frac{(m\a)^{-z}}{z}-\frac{(\a m)^{-z-1}}{2}\right\}\nonumber\\
&=\frac{\G(z+1)}{(2\pi)^{z+1}}\sum_{m=1}^{\infty}\lambda(m\a, z),
\end{align}
where in the penultimate step, we used the following result \cite[p.~609, formula (25.11.27)]{nist}, valid for Re$(w)>-1, w\neq 1$, Re$(a)>0$:
\begin{equation*}
\zeta(w,a)=\frac{1}{2}a^{-w}+\frac{a^{1-w}}{w-1}+\frac{1}{\G(w)}\int_{0}^{\infty}x^{w-1}e^{-ax}\left(\frac{1}{e^x-1}-\frac{1}{x}+\frac{1}{2}\right)\, dx
\end{equation*}
with $w=z+1, a=\a m$ amd $x=2\pi t$.

Finally \eqref{genf-1} and \eqref{ds1} give \eqref{equi}. This completes the proof of the modular transformation in \eqref{mainneq22} for $0<$ Re $z<1$. As explained in \cite[p.~1162]{dixit}, the result follows for $-1<$ Re $z<1$ by analytic continuation.

This proves \eqref{mainneq22}, and hence along with \eqref{dun3}, completes the proof of Corollary \ref{hurr}.

\begin{center}
\textbf{Acknowledgements}
\end{center}
The first author is funded in part by the grant NSF-DMS 1112656 of Professor Victor H. Moll of Tulane University and sincerely thanks him for the support. The second author wishes to acknowledge partial support of SNF grant 200020\_149150$\backslash$1.

\end{document}